\newtheorem{theorem}{Theorem}[section]
\newtheorem{lemma}[theorem]{Lemma}
\newtheorem{proposition}[theorem]{Proposition}
\newtheorem{corollary}[theorem]{Corollary}
\newtheoremstyle{definition}
  {6pt}
  {6pt}
  {}
  {}
  {\bfseries}
  {.}
  {.5em}
  {}%
\theoremstyle{definition}
\newtheorem{definition}[theorem]{Definition}
\newtheoremstyle{remark}
  {6pt}
  {6pt}
  {}
  {}
  {\bfseries}
  {.}
  {.5em}
  {}%
\theoremstyle{remark}
\newtheorem{remark}[theorem]{Remark}
\newtheorem*{thr}{Main theorem}
\newtheorem*{de}{Definition}
\renewcommand\@makefntext[1]{%
\setlength\parindent{1em}%
\noindent
\makebox[1.8em][r]{}{#1}}
\begin{document}
\title{\bf \large A NOTE ON JOINT REDUCTIONS \\ AND MIXED MULTIPLICITIES}  
\author{
 \centerline{Duong Quoc Viet and Le Van Dinh}\\
  \centerline{Truong Thi Hong Thanh}\\ 
\small Department of Mathematics, Hanoi National University of Education\\
\small 136 Xuan Thuy street, Hanoi, Vietnam\\
\small  duongquocviet@fmail.vnn.vn; dinhlevands@gmail.com;
 thanhtth@hnue.edu.vn\\
}

   \date{}
\maketitle
\centerline{\parbox[c]{11 cm}{
\small  ABSTRACT: Let  $(A, \frak m)$  be  a  noetherian   local ring with maximal ideal $\frak{m}$ and infinite residue field $k = A/\frak{m}.$  Let $J$ be an $\frak m$-primary ideal, $I_1,\ldots,I_s$ ideals of $A$, and $M$ a finitely generated  $A$-module. In this paper, we interpret  mixed multiplicities of $(I_1,\ldots, I_s,J)$ with respect to $M$ as multiplicities of joint reductions of them. This generalizes the Rees's theorem on mixed multiplicity \cite[Theorem 2.4]{Re}. As an application we show that mixed multiplicities are also  multiplicities of Rees's superficial sequences.}}

\section{Introduction}
Let  $(A, \frak m)$  be  a  noetherian   local ring with maximal ideal $\frak{m}$ and infinite residue field $k = A/\frak{m}.$  Let $M$ be a finitely generated  $A$-module, and
$I_1,\ldots,I_s$ ideals of $A$ such that $I= I_1\cdots I_s$  is not contained in $ \sqrt{\mathrm{Ann}_A{M}}$. Set $\dim M/0_M:I^\infty = q.$
Let $J$ be an $\frak m$-primary ideal.  By  \cite[Proposition 3.1]{Vi} (see also \cite {MV}),    
$$\ell_A\Big(\frac{J^{n_0}I_1^{n_1}\cdots I_s^{n_s}M}{J^{n_0 + 1}I_1^{n_1}\cdots I_s^{n_s}M}\Big)$$
is a polynomial of total degree $q-1$ for all large $n_0, n_1, \ldots, n_s.$  Write the terms of total degree $q-1$ in this polynomial in the form 
\footnotetext{\begin{itemize}
\item[ ] This research was in part supported by a grant from  NAFOSTED.
\item[ ]{\it Mathematics  Subject  Classification}(2010): Primary 13H15. Secondary 14C17, 13D40, 13C15. 
\item[ ]{\it Key words and phrases}: Mixed multiplicity,  multiplicity,  joint reduction, (FC)-sequence.
\end{itemize}}  
$$\sum_{k_0 + k_1 + \cdots + k_s = q - 1} e_A(J^{[k_0+1]},I_1^{[k_1]}, \ldots,I_s^{ [k_s]}, M)\frac{n_0^{k_0}n_1^{k_1}\cdots n_s^{k_s}}{k_0!k_1! \cdots k_s!},$$
then $e_A(J^{[k_0+1]},I_1^{[k_1]}, \ldots,I_s^{ [k_s]}, M)$ are non-negative integers not all zero. We call (see \cite {MV} and \cite{Ve})  $$e_A(J^{[k_0+1]},I_1^{[k_1]}, \ldots,I_s^{ [k_s]}, M)$$ the 
{\it mixed multiplicity of $(I_1,\ldots, I_s,J)$ with respect to $M$ of the type $( k_1, \ldots, k_s,k_0+1)$}.

Risler and Teissier in 1973 \cite{Te} defined  mixed multiplicities of $\frak m$-primary ideals and interpreted them as Hilbert-Samuel multiplicities of ideals generated by general elements. Katz and Verma in 1989 \cite{KV} started the investigation of mixed multiplicities of ideals of positive height. For the case of arbitrary ideals, the first author  in 2000 \cite{Vi} described  mixed multiplicities as  Hilbert-Samuel multiplicities via (FC)-sequences. Moreover, Trung and Verma in 2007 \cite{TV} interpreted mixed volumes of polytopes as  mixed multiplicities of ideals.     
In past years, the positivity and the
relationship between  mixed multiplicities  and  Hilbert-Samuel multiplicity of ideals have attracted much attention (see e.g. \cite{CP, DV, HHRT, KR1, KR2, MV,  Ro, Sw,Ve, Vi4, Vi2, Vi3, VM, VT}).
 
 We turn now to  Rees's work in 1984 \cite{Re}. The author of this work  built joint reductions
of $\frak m$-primary ideals and showed that each mixed multiplicity of $\frak m$-primary ideals is the multiplicity of a joint reduction of them.
 O'Carroll in 1987 \cite{Oc} proved the existence of joint reductions in the general case.
\begin{de}[$\mathrm{see\ Definition\ \ref{de31}}$]
 Let $\frak R$ be a subset of $\bigcup_{i=1}^sI_i$  
consisting of $k_1$ elements of $I_1,\ldots,k_s$ elements of  $I_s$
with $k_1,\ldots,k_s \ge 0.$  Set $\frak I_i = \frak R \bigcap I_i$ for $i=1,\ldots,s$ and $(\emptyset) = 0_A.$     Then 
$\frak R$ is called a {\it joint reduction} of $(I_1,\ldots,I_s)$ with respect to $M$ of the type $(k_1,\ldots,k_s)$ if $I_1^{n_1+1}\cdots I_s^{n_s+1}M = \sum_{i=1}^s(\frak I_i)
I_1^{n_1+1}\cdots I_i^{n_i}\cdots I_s^{n_s+1}M$ for all large integers $n_1,n_2,\ldots, n_s .$ 
\end{de}

Although the relationship between  mixed multiplicities  and  Hilbert-Samuel multiplicity of arbitrary ideals was solved, whether there is a similar result to Rees's theorem \cite[Theorem 2.4]{Re} for arbitrary ideals, i.e., whether there is a relationship between mixed multiplicities of arbitrary ideals and multiplicities of their joint reductions, is not yet known. And this problem  became an open question  in the theory of mixed multiplicities.
  The aim of this paper is to extend Rees's theorem to arbitrary ideals.         
 As one might expect, we obtain the following result.   

\begin{thr}[see Theorem \ref{th37}]
{\it  Let $M$ be a finitely generated $A$-module of Krull dimension $d > 0$. Let $J$ be an $\frak m$-primary ideal, and $I_1,\ldots, I_s$   ideals of $A.$ Set $I= I_1\cdots I_s$. Assume that $\mathrm{ht}\Big(\dfrac{I+\mathrm{Ann}_AM}{\mathrm{Ann}_AM}\Big) = h >0$ and $k_0, k_1,\ldots,k_s$ are non-negative integers such that $k_0+ k_1+\cdots+k_s = d-1$ and $ k_1+\cdots+k_s < h.$ 
Let $\frak R= \{x_1, \ldots, x_d\}$ be a joint reduction of 
 $(I_1,\ldots,I_s,J)$ with respect to $M$ of the type $(k_1,\ldots,k_s, k_0+1)$ such that     
$\frak R$ is a system of parameters for $M.$ Then $$e_A(J^{[k_0+1]}, I_1^{[k_1]},\ldots, I_s^{[k_s]}, M) = e_A(\frak R, M).$$}
\end{thr}

It should be noted that this theorem does not hold in general if one omits  the assumption $ k_1+\cdots+k_s < h$ (see Remark \ref{rm35}). Our approach, which is based on  the results in \cite {Vi} and \cite {VT1}, links the multiplicity of (FC)-sequences in \cite{Vi}  and the multiplicity of joint reductions via our studies on these sequences (see Proposition \ref{lm33} and 
Lemma \ref{lm36}). As an application of the main theorem, we  interpret mixed multiplicities as Hilbert-Samuel multiplicities of Rees's superficial sequences (see Remark \ref{rm20}) and recover Rees's theorem  in \cite[Theorem\ 2.4]{Re} (see Corollary \ref{co34}).

The paper is divided into three sections. Section 2 deals with the existences of Rees's superficial sequences,  weak-(FC)-sequences, and joint reductions.
 Apart from the proof of  the main theorem, Section 3 also contains a brief treatment on the relationship between mixed multiplicities and Rees's superficial sequences.

\section{(FC)-Sequences and Joint Reductions}

In general, the
relationship between  mixed multiplicities  and  Hilbert-Samuel multiplicity of arbitrary ideals was solved in \cite{Vi} by using (FC)-sequences. In this section, we give some results concerning Rees's superficial sequences,  (FC)-sequences, and joint reductions which will be used in this paper.

\begin{definition}[\cite{Vi}]  Let  $(A, \frak m)$  be  a  noetherian   local ring with maximal ideal $\frak{m}$ and infinite residue field $k = A/\frak{m}.$  Let $M$ be a finitely generated  $A$-module, and
$I_1,\ldots,I_s$ ideals of $A$ such that $I_1\cdots I_s$  is not contained in $ \sqrt{\mathrm{Ann}_A{M}}$. Set $I=I_1\cdots I_s.$  An element $x \in A$ is called an  (FC)-{\it element} of $(I_1,\ldots, I_s)$ with respect to $M$ if there exists $i \in \{ 1, \ldots, s\}$ such that $x \in I_i$ and 
the following conditions are satisfied:  
 \begin{enumerate}[(FC1):]
 \item $x{M}\bigcap {I_1}^{n_1} \cdots I_i^{n_i+1}\cdots I_s^{n_s}{M} 
= x{I_1}^{n_1}\cdots I_i^{n_i}\cdots I_s^{n_s}{M}$
for all $n_i\gg 0$ and all $n_1,\ldots,n_{i-1},n_{i+1}, \ldots, n_s \geq 0$. 

\item $x$ is an $I$-filter-regular element with respect to $M,$ i.e.,\;$0_M:x \subseteq 0_M: I^{\infty}.$
\item  $\dim M/(xM:I^\infty)=\dim M/(0_M:I^\infty)-1.$
 \end{enumerate}
       We call $x$ a {\it weak}-(FC)-{\it element} of $(I_1,\ldots, I_s)$ with respect to $M$ if $x$ satisfies the conditions (FC1) and (FC2). One can also call an element satisfying the condition (FC1) a {\it Rees's superficial element} of $(I_1,\ldots, I_s)$ with respect to $M$.

Let $x_1, \ldots, x_t$ be elements of $A$. For any $0\le i \le t,$ set\;      $M_i = \dfrac{M}{(x_1, \ldots, x_{i})M}$.  Then 
$x_1, \ldots, x_t$ is called an  (FC)-{\it sequence} (respectively, a {\it weak}-(FC)-{\it sequence}, a  {\it Rees's superficial sequence}) of $(I_1,\ldots, I_s)$ with respect to $M$ if $x_{i + 1}$ is an (FC)-element (respectively, a weak-(FC)-element, a Rees's superficial element) of $(I_1,\ldots, I_s)$ with respect to  $M_i$ for all $i = 0, \ldots, t - 1$. If an (FC)-sequence (respectively, a weak-(FC)-sequence, a  Rees's superficial sequence) of $(I_1,\ldots, I_s)$ with respect to $M$ consists of $k_1$ elements of $I_1,\ldots,k_s$ elements of  $I_s$
($k_1,\ldots,k_s \ge 0$) then it is called an (FC)-sequence (respectively, a weak-(FC)-sequence, a  Rees's superficial sequence) of $(I_1,\ldots,I_s)$ with respect to $M$ of the type $(k_1,\ldots,k_s)$. A  weak-(FC)-sequence $x_1, \ldots, x_t$ is called a {\it maximal weak-$(FC)$-sequence} if $I\nsubseteq \sqrt{\mathrm{Ann}_A{M_{t-1}}}$ and $I\subseteq \sqrt{\mathrm{Ann}_A{M_t}}.$  
\end{definition}

The existences of Rees's superficial elements and weak-(FC)-elements are established in Lemma \ref{lm32} and Proposition \ref{lm33}, which are the improvements of \cite[Lemma 1.2]{Re} and \cite[Remark 1]{Vi}, respectively.
 \enlargethispage{1cm}
\begin{lemma}[{see \cite [Lemma 1.2]{Re}}]\label{lm32}
 Let  $(A, \frak m)$  be  a  noetherian   local ring with maximal ideal $\frak{m}$ and infinite residue field $k = A/\frak{m},$ $I_1,\ldots,I_s$ ideals of $A,$ and $\Sigma$ a finite set of prime ideals of $A$.  Let $M$ be a finitely generated  $A$-module.  If for some $i\in\{1,\ldots,s\}$, $I_i$ is contained in no prime ideal of $\Sigma$, then there exists a non-empty Zariski open subset $U$ of $I_i/\frak mI_i$ such that for any element $x \in I_i$ with image $x + \frak mI_i \in U$, $x$ is not contained in any prime ideal of $\Sigma$ and $x$ is a Rees's superficial element of $(I_1,\ldots, I_s)$ with respect to $M,$ i.e.,  $$x{M}\bigcap {I_1}^{n_1} \cdots I_i^{n_i}\cdots I_s^{n_s}{M}= x{I_1}^{n_1}\cdots I_i^{n_i-1}\cdots I_s^{n_s}{M}$$ for all $n_i\gg 0$ and $n_1,\ldots,n_{i-1},n_{i+1}, \ldots, n_s \geq 0$. 
\end{lemma}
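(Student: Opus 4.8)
The plan is to realize Rees superficiality as a generic nonzerodivisor condition on a suitable multigraded associated graded module, and then to read off the open set $U$ from prime avoidance together with the hypothesis on $\Sigma$. First I would set up the graded machinery. Let $R=\bigoplus_{(n_j)_{j\ne i}}\big(\prod_{j\ne i}I_j^{n_j}\big)$ be the multi-Rees algebra of the ideals $\{I_j\}_{j\ne i}$ and let $N=\bigoplus_{(n_j)_{j\ne i}}\big(\prod_{j\ne i}I_j^{n_j}\big)M$ be the corresponding multi-Rees module; then $R$ is Noetherian and $N$ is a finitely generated $R$-module. Filtering $R$ and $N$ by the powers of the ideal $I_iR$, I form the associated graded ring $\mathcal G=\mathrm{gr}_{I_iR}(R)$ and the associated graded module $G=\mathrm{gr}_{I_iR}(N)$. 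Then $\mathcal G$ is Noetherian, multigraded by a first index $n\ge 0$ (the $I_i$-direction) together with $(n_j)_{j\ne i}$, its first-degree-one piece is $[\mathcal G]_{(1,\underline 0)}=I_i/I_i^2$, and $G$ is a finitely generated multigraded $\mathcal G$-module with
$$[G]_{(n,(n_j))}=\frac{I_i^{n}\big(\prod_{j\ne i}I_j^{n_j}\big)M}{I_i^{n+1}\big(\prod_{j\ne i}I_j^{n_j}\big)M}.$$
Every $x\in I_i$ has an initial form $x^{*}\in[\mathcal G]_{(1,\underline 0)}$, and multiplication by $x^{*}$ raises the first degree by one while preserving the remaining degrees.

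The heart of the matter is the equivalence between superficiality and $x^{*}$ being a nonzerodivisor on $G$ in large first degree. Let $\mathcal G_{++}$ be the ideal generated by $[\mathcal G]_{(1,\underline 0)}$; since $\mathcal G$ is standard in the $I_i$-direction, $\mathcal G_{++}=\bigoplus_{n\ge 1}[\mathcal G]_{(n,\cdot)}$. I would show that if $x^{*}$ lies outside every associated prime $\frak P$ of $G$ with $\frak P\not\supseteq\mathcal G_{++}$, then $(0:_G x^{*})$ is $\mathcal G_{++}$-torsion, hence, being finitely generated, is annihilated in all first degrees $n\ge c$ for a single $c$, uniformly in $(n_j)_{j\ne i}$. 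A direct graded computation then yields the cut-off identity $\big(I_i^{n}\prod_{j\ne i}I_j^{n_j}M:_M x\big)\cap I_i^{c}\prod_{j\ne i}I_j^{n_j}M=I_i^{n-1}\prod_{j\ne i}I_j^{n_j}M$ for $n$ large, and an Artin--Rees argument removes the cut-off $I_i^{c}$, converting this into $xM\cap I_i^{n}\prod_{j\ne i}I_j^{n_j}M=xI_i^{n-1}\prod_{j\ne i}I_j^{n_j}M$. The absorption of the low-order discrepancy into $(0:_M x)$ and the uniformity of all the bounds in $(n_j)_{j\ne i}$ both rest on the finite generation of $G$ over $\mathcal G$.

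Finally I would produce $U$. Put $V=I_i/\frak m I_i$, a finite-dimensional $k$-vector space. For each $P\in\Sigma$ the hypothesis $I_i\not\subseteq P$ gives, by Nakayama, that $W_P:=\big((I_i\cap P)+\frak m I_i\big)/\frak m I_i$ is a proper subspace of $V$, and $x\in P$ forces the image of $x$ into $W_P$, so avoiding $W_P$ guarantees $x\notin P$. For each relevant associated prime $\frak P$ of $G$, graded Nakayama shows that $[\frak P]_{(1,\underline 0)}+\frak m I_i\ne I_i$ in $I_i/I_i^2$ (otherwise $\frak P\supseteq[\mathcal G]_{(1,\underline 0)}$ and hence $\frak P\supseteq\mathcal G_{++}$), so the image $\overline{\frak P}$ of $[\frak P]_{(1,\underline 0)}$ in $V$ is a proper subspace, and any $x$ with image outside $\overline{\frak P}$ has $x^{*}\notin\frak P$. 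Taking $U$ to be the complement in $V$ of the finite union of the subspaces $W_P$ and $\overline{\frak P}$, the infiniteness of $k$ makes $U$ a nonempty Zariski-open set, and by the previous two paragraphs any $x\in I_i$ with $x+\frak m I_i\in U$ avoids $\Sigma$ and is a Rees superficial element. The hard part will be the middle paragraph: passing from the clean associated-graded nonzerodivisor statement to the exact module identity after deleting the $I_i^{c}$ cut-off, with every bound uniform in the remaining exponents.
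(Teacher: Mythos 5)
Your overall strategy (make a generic element avoid the finitely many associated primes of a suitable graded object, then use Nakayama and the infiniteness of $k$ to produce a nonempty open set) is the same as the paper's, and your final paragraph constructing $U$ matches the paper's almost verbatim. The gap is in the middle paragraph, and it comes from your choice of graded object. The module $G=\mathrm{gr}_{I_iR}(N)$ has pieces $I_i^{n}\prod_{j\ne i}I_j^{n_j}M/I_i^{n+1}\prod_{j\ne i}I_j^{n_j}M$, so the nonzerodivisor property of $x^{*}$ in large first degree only controls elements that already lie in $\prod_{j\ne i}I_j^{n_j}M$: after the descending induction it yields a statement about $x\prod_{j\ne i}I_j^{n_j}M\cap I_i^{n}\prod_{j\ne i}I_j^{n_j}M$. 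The lemma, however, concerns $xM\cap I_i^{n}\prod_{j\ne i}I_j^{n_j}M$ with the element of $M$ arbitrary. The Artin--Rees step you invoke to ``remove the cut-off'' can only be applied either to the $I_i$-adic filtration of $M$ (which gives $xM\cap I_i^{n}M\subseteq xI_i^{n-c}M$, with no control in the $j\ne i$ directions) or to the full multigraded filtration (which drops \emph{all} exponents by $c$, and in any case fails for small $n_j$); neither places the new representative inside $\prod_{j\ne i}I_j^{n_j}M$, so the induction in $G$ cannot be started, and the passage from the cut-off identity to the full identity is circular. (A smaller point: your cut-off identity cannot be an equality as written, since its left side contains $(0_M:x)\cap I_i^{c}\prod_{j\ne i}I_j^{n_j}M$.)

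This is precisely the difficulty the \emph{extended} Rees module is designed to overcome, and it is how the paper proceeds: one takes $\mathcal{M}=\bigoplus_{\underline{n}\in\mathbb{Z}^{s}}I_1^{n_1}\cdots I_s^{n_s}t_1^{n_1}\cdots t_s^{n_s}M$ with the convention $I_j^{n}=A$ for $n\le 0$, so that the graded pieces with very negative $n_j$ ($j\ne i$) are copies of $M$ itself, and the action of $xt_i$ on $\mathcal{M}$ directly encodes the passage from $xM$ to $xI_1^{n_1}\cdots I_i^{n_i-1}\cdots I_s^{n_s}M$. The element $u=t_1^{-1}\cdots t_s^{-1}$ is a nonzerodivisor on $\mathcal{M}$, the associated primes of $\mathcal{M}/u^{T}\mathcal{M}$ are independent of $T$ and finite, and the correct genericity condition is that $xt_i$ avoid those associated primes not containing $I_it_i$; the corresponding pullbacks $Q'_l=\{a\in A\mid at_i\in Q_l\}$ give exactly the proper subspaces you would intersect out of $I_i/\frak mI_i$. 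If you replace your $\mathcal{G}$ and $G$ by this extended object (or otherwise establish the transition from ``$x\prod_{j\ne i}I_j^{n_j}M\cap$'' to ``$xM\cap$'' uniformly in the $n_j$), the remainder of your argument goes through.
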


\begin{proof}
Set 
$\mathcal{R}= \bigoplus_{n_1, \ldots, n_s \in \;\;\mathbb{Z}} I_1^{n_1}\cdots I_s^{n_s}t_1^{n_1}\cdots t_s^{n_s}$  and 
$$ \mathcal{M} = \bigoplus_{n_1, \ldots, n_s \in \;\; \mathbb{Z}} I_1^{n_1}\cdots I_s^{n_s}t_1^{n_1}\cdots t_s^{n_s}M,$$
where $t_1, \ldots, t_s$ are indeterminates and $I_j^{n_j} = A$ for $n_j \leq 0$. Then $\mathcal{R}$ is a noetherian $\mathbb{Z}^s$-graded ring and $\mathcal{M}$ is a noetherian graded $\mathcal{R}$-module. 
Set $u_1= t_1^{-1}, \ldots, u_s = t_s^{-1}$. Since $u = u_1 \cdots u_s$ is a non-zerodivisor on $\mathcal{M}$,  the set of prime ideals  associated with $u^T\mathcal{M}$ is the same for all $T$ and so is finite (see \cite[Lemma 2.7]{Re}). 
Let $Q_1, \ldots, Q_t$ be all the associated prime ideals of $\mathcal{M}/u^T\mathcal{M}$ that do not contain $I_it_i$.
For each $l = 1, \ldots, t$, set $$Q'_l = \{a \in A \mid at_i \in Q_l\}.$$
Then $Q'_l$ is an ideal of $A$ that does not contain $I_i$. Let $W_l$ be the image of $Q'_l\bigcap I_i$ in $I_i/\frak mI_i$. 
Assume that $\Sigma = \{P_1, \ldots, P_r\}$. Also, for each $h = 1, \ldots, r$, let $V_h$ be the image of $P_h \bigcap I_i$ in $I_i/\frak mI_i$. By Nakayama's lemma, $W_l$ and $V_h$ are proper $k$-vector subspaces of $I_i/\frak mI_i$. Let $U$ be the complement of the union of all these $W_l$ and $V_h$. Since $k$ is an infinite field, $U$ is a non-empty Zariski open subset of $I_i/\frak mI_i$. 

Now, let $x \in I_i$ such that its image $x + \frak mI_i$ lies  in $U$. Then $x$ is not contained in any prime ideal in $\Sigma$ and $xt_i$ is not contained in  $ Q_l$ for all $l = 1, \ldots, t$. 
Using the same  arguments as in the proof of \cite[Lemma 1.2]{Re} we have 
$$x{M}\bigcap {I_1}^{n_1} \cdots I_i^{n_i}\cdots I_s^{n_s}{M}= x{I_1}^{n_1}\cdots I_i^{n_i-1}\cdots I_s^{n_s}{M}$$ for all $n_i\gg 0$ and $n_1,\ldots,n_{i-1},n_{i+1}, \ldots, n_s \geq 0$. 
 Hence $x$ is a Rees's superficial element of $(I_1,\ldots, I_s)$ with respect to $M$.
\end{proof}

\begin{proposition}[see {\cite[Remark 1]{Vi}}] \label{lm33}
 Let  $(A, \frak m)$  be  a  noetherian   local ring with maximal ideal $\frak{m}$ and infinite residue field $k = A/\frak{m},$ $I_1,\ldots,I_s$ ideals of $A.$ Let $M$ be a finitely generated  $A$-module. Assume that 
$I=I_1\cdots I_s \nsubseteq 
\sqrt{\mathrm{Ann}_A{M}}.$  
  Then for any $1 \le i \le s,$ there exists a non-empty Zariski open subset $U$ of $I_i/\frak mI_i$ such that if $x \in I_i$ with image $x + \frak mI_i \in U$ then $x$ is
a weak-$(FC)$-element of $(I_1,\ldots, I_s)$ with respect to $M.$ 
\end{proposition}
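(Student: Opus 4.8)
The plan is to reduce everything to Lemma \ref{lm32}. Notice that condition (FC1) in the definition of a weak-(FC)-element is exactly the defining property of a Rees's superficial element, and Lemma \ref{lm32} already produces a non-empty Zariski open set of elements of $I_i$ satisfying it. Thus the only additional task is to secure, generically and on the same open set, the filter-regularity condition (FC2), namely $0_M:x\subseteq 0_M:I^\infty$.

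To deal with (FC2), I would first recast it in terms of associated primes. Write $N=0_M:I^\infty$ for the submodule of $M$ consisting of the elements annihilated by some power of $I$. A standard computation on the $I$-power-torsion submodule gives $\mathrm{Ass}_A(M/N)=\{P\in\mathrm{Ass}_A(M):I\nsubseteq P\}$. Now if $x$ lies outside every prime of this finite set, then $x$ is a nonzerodivisor on $M/N$, so that $xm=0$ forces $m\in N$; equivalently $0_M:x\subseteq N=0_M:I^\infty$, which is precisely (FC2). Hence it suffices to produce a generic $x\in I_i$ avoiding every prime in $\Sigma:=\{P\in\mathrm{Ass}_A(M):I\nsubseteq P\}$.

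The compatibility point needed to invoke Lemma \ref{lm32} with this $\Sigma$ is that $I_i$ is contained in no prime of $\Sigma$. This holds because $I=I_1\cdots I_s$ and each $P\in\Sigma$ is prime: if $I_i\subseteq P$ then $I\subseteq P$, contradicting $I\nsubseteq P$. (The hypothesis $I\nsubseteq\sqrt{\mathrm{Ann}_A M}$ also guarantees $\Sigma\neq\emptyset$, since otherwise every associated prime, and in particular every minimal prime of $M$, would contain $I$.) Applying Lemma \ref{lm32} to this $\Sigma$ therefore yields a non-empty Zariski open subset $U$ of $I_i/\frak mI_i$ such that each $x\in I_i$ with $x+\frak mI_i\in U$ is simultaneously a Rees's superficial element, giving (FC1), and outside every prime of $\Sigma$, giving (FC2) by the previous paragraph. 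Such an $x$ is a weak-(FC)-element, as claimed. The only genuinely delicate step is the associated-prime description of (FC2) together with the verification $I_i\nsubseteq P$ for $P\in\Sigma$; once these are established, superficiality and the avoidance of $\Sigma$ are delivered together by Lemma \ref{lm32} on a single open set, with no further intersection required.
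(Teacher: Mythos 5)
Your proposal is correct and follows essentially the same route as the paper: both take $\Sigma=\mathrm{Ass}_A\big(M/(0_M:I^\infty)\big)=\{P\in\mathrm{Ass}_AM\mid I\nsubseteq P\}$, check that $I_i$ lies in no prime of $\Sigma$, and apply Lemma \ref{lm32} to obtain a single open set whose elements are simultaneously Rees superficial (FC1) and avoid $\Sigma$, hence $I$-filter-regular (FC2). No substantive differences.
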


\begin{proof}
Set $\Sigma = \mathrm{Ass}_A\big(M/(0_{M}:I^\infty)\big).$  Since $I \nsubseteq 
\sqrt{\mathrm{Ann}_A{M}}$, $\Sigma \not= \emptyset$. It is easily seen that $\Sigma$ is a finite set and $\Sigma = \{P \in \mathrm{Ass}_AM \mid P \nsupseteq I\}$. 
By Lemma \ref{lm32}, for any $1 \le i \le s,$ there exists a non-empty Zariski open subset $U$ of $I_i/\frak mI_i$ such that if $x \in I_i$ with image $x + \frak mI_i \in U$ then $x$ is not contained in any prime ideal in $\Sigma$ and $x$ is a Rees's superficial element of $(I_1,\ldots, I_s)$ with respect to $M.$ 
Since $x \notin P$ for all $P \in \Sigma$, $x$ is also an $I$-filter-regular element with respect to $M$. Hence, for any element $x \in I_i$ with image $x + \frak mI_i \in U$,   $x$ is
a weak-(FC)-element of $(I_1,\ldots, I_s)$ with respect to $M.$ 
\end{proof}

Recall that the concept of joint reductions of $\mathfrak{m}$-primary ideals  was given by Rees in 1984  \cite{Re}. And he proved that mixed multiplicities of $\mathfrak{m}$-primary ideals are multiplicities of ideals generated by joint reductions.  
This concept was extended to the set of arbitrary ideals by \cite{MV, Oc,  Vi3}.

\begin{definition}\label{de31}
 Let $\frak R$ be a subset of $\bigcup_{i=1}^sI_i$  
consisting of $k_1$ elements of $I_1,\ldots,k_s$ elements of  $I_s$
with $k_1,\ldots,k_s \ge 0.$  Set $\frak I_i = \frak R \bigcap I_i$ for $i=1,\ldots,s$ and $(\emptyset) = 0_A.$     Then 
$\frak R$ is called a {\it joint reduction} of $(I_1,\ldots,I_s)$ with respect to $M$ of the type $(k_1,\ldots,k_s)$ if $$I_1^{n_1+1}\cdots I_s^{n_s+1}M = \sum_{i=1}^s(\frak I_i)
I_1^{n_1+1}\cdots I_i^{n_i}\cdots I_s^{n_s+1}M$$ for all large integers $n_1,n_2,\ldots, n_s .$ 
\end{definition}

\begin{lemma}\label{lm36}
Let $M$ be a finitely generated  $A$-module with $\dim M = d > 0$, $J$ an $\frak m$-primary ideal, and $I_1,\ldots, I_s$ ideals of $A.$  Assume that $k_0, k_1,\ldots,k_s$ are non-negative integers such that $k_0+ k_1+\cdots+k_s = d-1$. Let $\frak R = \{x_1, \ldots, x_d\}$ be a joint reduction of $(I_1,\ldots,I_s,J)$ with respect to $M$ of the type $(k_1,\ldots,k_s, k_0+1)$ such that $\frak R$ is a system of parameters for $M$ and $x_1 \in I_1$. Then there exists a non-empty Zariski open subset $U$ of $I_1/\frak mI_1$ such that for any $x \in I_1$ with image $x + \frak mI_1 \in U,$  $\{x, x_2, \ldots, x_d\}$ is a joint reduction of $(I_1,\ldots,I_s,J)$ with respect to $M$ of the type $(k_1,\ldots,k_s, k_0+1)$ and $\{x,x_2, \ldots, x_d\}$ is a system of parameters for $M.$
\end{lemma}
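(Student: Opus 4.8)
The plan is to construct $U$ as the intersection of two nonempty Zariski-open subsets of $I_1/\frak m I_1$: one, $U_1$, forcing the system-of-parameters property, and one, $U_2$, forcing the joint-reduction property. In both cases I will obtain the open set by the prime-avoidance mechanism used in the proof of Lemma \ref{lm32} (realizing the bad locus as a finite union of proper $k$-subspaces of $I_1/\frak m I_1$), and in both cases non-emptiness will be guaranteed by the single fact that $x_1$ itself already satisfies the condition in question. Taking $U=U_1\cap U_2$ will then settle both assertions simultaneously.

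First I would dispose of the system-of-parameters part. Set $N=M/(x_2,\ldots,x_d)M$. Since $\{x_1,\ldots,x_d\}$ is a system of parameters for $M$ and $\dim M=d$, we have $\dim N=1$ and $x_1$ is a parameter for $N$, so $x_1$ lies in none of the finitely many primes $P\in\mathrm{Supp}_A N$ with $\dim A/P=1$. Because $x_1\in I_1$, this forces $I_1\nsubseteq P$ for each such $P$, whence the image of $P\cap I_1$ in $I_1/\frak m I_1$ is a proper $k$-subspace. Letting $U_1$ be the complement of the union of these subspaces, I get a nonempty Zariski-open set such that $x+\frak m I_1\in U_1$ implies $x\notin P$ for all such $P$, hence $\dim N/xN=0$ and $\{x,x_2,\ldots,x_d\}$ is a system of parameters for $M$.

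For the joint-reduction part I would pass to the multigraded Rees module $\mathcal M=\bigoplus I_1^{n_1}\cdots I_s^{n_s}J^{n_{s+1}}\,t_1^{n_1}\cdots t_{s+1}^{n_{s+1}}M$ over $\mathcal R=\bigoplus I_1^{n_1}\cdots J^{n_{s+1}}\,t_1^{n_1}\cdots t_{s+1}^{n_{s+1}}$, exactly as in Lemma \ref{lm32}. Writing $\frak I_1=\frak R\cap I_1=(x_1,y_2,\ldots,y_{k_1})$ for the $I_1$-part of $\frak R$, let $\mathcal B\subseteq\mathcal M$ be the graded $\mathcal R$-submodule generated by $y_2t_1,\ldots,y_{k_1}t_1$ together with all $zt_i$ for $z\in\frak I_i$ and $i\neq 1$ (the contributions of every joint-reduction element except $x_1$), and put $\mathcal N=\mathcal M/\mathcal B$. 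Since $I_1^{n_1+1}\cdots=I_1\cdot(I_1^{n_1}\cdots)$, the identity $\mathcal N_{(n_1+1,n_2,\ldots)}=I_1t_1\cdot\mathcal N_{(n_1,n_2,\ldots)}$ holds in every degree, and a direct comparison of graded components (using $\mathcal B_{\mathbf{n}+\mathbf{e}_1}$ equal to the ``other'' terms of the joint-reduction equation for large indices) shows that the joint-reduction property of $\{x_1,\ldots,x_d\}$ is precisely the statement $x_1t_1\cdot\mathcal N_{\mathbf{n}}=\mathcal N_{\mathbf{n}+\mathbf{e}_1}$ for all $\mathbf{n}\gg0$; the desired property for $\{x,x_2,\ldots,x_d\}$ is the same statement with $x_1t_1$ replaced by $xt_1$. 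Everything thus reduces to showing that, for generic $x\in I_1$, multiplication by $xt_1$ is surjective onto the high-degree components of $\mathcal N$.

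To finish I would reduce modulo $\frak m$: for each fixed $\mathbf{n}$, Nakayama's lemma applied to the finitely generated $A$-module $\mathcal N_{\mathbf{n}+\mathbf{e}_1}=I_1t_1\,\mathcal N_{\mathbf{n}}$ shows that $xt_1\,\mathcal N_{\mathbf{n}}=\mathcal N_{\mathbf{n}+\mathbf{e}_1}$ if and only if $\bar x\,\bar t_1\,\bar{\mathcal N}_{\mathbf{n}}=\bar{\mathcal N}_{\mathbf{n}+\mathbf{e}_1}$ in the multigraded fiber module $\bar{\mathcal N}=\mathcal N/\frak m\mathcal N$ over the infinite field $k$, where $\bar x\in I_1/\frak m I_1=\bar{\mathcal R}_{\mathbf{e}_1}$. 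The condition ``$\bar x\,\bar t_1\,\bar{\mathcal N}_{\mathbf{n}}=\bar{\mathcal N}_{\mathbf{n}+\mathbf{e}_1}$ for all $\mathbf{n}\gg0$'' fails exactly when $\bar x\,\bar t_1$ lies in one of the (finitely many, by noetherianity) minimal primes of $\bar{\mathcal N}$ that are relevant in all directions, and each such prime meets $\bar{\mathcal R}_{\mathbf{e}_1}$ in a proper subspace because $\bar x_1$, coming from the given joint reduction, avoids it. Taking $U_2$ to be the complement of the union of these subspaces produces the required nonempty open set. The hard part is precisely this last reduction: isolating the $t_1$-direction through the submodule $\mathcal B$ so that the joint-reduction condition turns into the surjectivity of a single fiber-cone element, and then identifying the failure locus with a finite union of proper linear subspaces of $I_1/\frak m I_1$ in the style of Lemma \ref{lm32}; once this is in place, non-emptiness is automatic from $x_1$ and the result follows with $U=U_1\cap U_2$.
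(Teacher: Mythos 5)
Your overall architecture ($U=U_1\cap U_2$, with $U_1$ handling one property and $U_2$ the other) matches the paper's, and your treatment of the system-of-parameters half is essentially the paper's argument: avoid the finitely many primes of $\mathrm{Supp}\,M/(x_2,\ldots,x_d)M$ of maximal coheight, note that $x_1$ already avoids them so the excluded subspaces of $I_1/\frak m I_1$ are proper, and invoke that $k$ is infinite. That part is fine.

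The joint-reduction half, however, has a genuine gap at its crux. After translating the condition into ``$\bar x\bar t_1\bar{\mathcal N}_{\mathbf n}=\bar{\mathcal N}_{\mathbf n+\mathbf e_1}$ for all $\mathbf n\gg 0$'' (a translation that is essentially correct), you assert that this fails \emph{exactly} when $\bar x\bar t_1$ lies in one of the relevant minimal primes of $\bar{\mathcal N}$, justifying only the properness of the resulting subspaces (via $\bar x_1$), not the equivalence itself. The necessity direction is easy, but the sufficiency direction --- that avoiding the relevant minimal primes of $\bar{\mathcal N}$ forces $\bar{\mathcal N}/\bar x\bar t_1\bar{\mathcal N}$ to vanish in all large multidegrees --- is false for general multigraded modules: cutting a relevant component $V(P)$ by a degree-$\mathbf e_1$ form $\ell\notin P$ can produce a \emph{new} relevant component (already $\bar{\mathcal N}=k[u_1,u_2,v]$ with $\deg u_i=(1,0)$, $\deg v=(0,1)$ shows this: no $\ell$ of degree $(1,0)$ works even though the only minimal prime is $(0)$). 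The statement only becomes true here because a good element $x_1$ is already known to exist, which forces every relevant component of $\bar{\mathcal N}$ to have zero-dimensional projection to the $\mathbf e_1$-factor; turning that observation into a proof is precisely the nontrivial multigraded dimension/openness argument that the paper outsources to \cite[Lemma 17.3.2]{SH} (openness of the joint-reduction locus in $T=(I_1/\frak m I_1)^{k_1}\oplus\cdots\oplus(J/\frak m J)^{k_0+1}$) combined with \cite[Lemma 8.5.12]{SH} (a nonempty slice of a nonempty open set is open), after first reducing to $M=A/\mathrm{Ann}_AM$ via \cite[Lemma 17.1.4]{SH}. As written, your $U_2$ is in general too large, and the sentence carrying the whole weight of the lemma is unproved; you would need either to import the cited lemmas from \cite{SH} as the paper does, or to supply the missing argument that the existence of the joint reduction $\frak R$ collapses the failure locus to the finite union of traces you describe. (Two smaller points: the identification $\bar{\mathcal R}_{\mathbf e_1}=I_1/\frak m I_1$ requires the non-negatively graded Rees algebra rather than the extended one of Lemma \ref{lm32}, and ``relevant in all directions'' needs a precise definition before the prime-avoidance count can even be set up.)
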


\begin{proof}
By \cite[Lemma 17.1.4]{SH}, a set of elements of $A$ is a joint reduction of $(I_1,\ldots,I_s,J)$ with respect to $M$ if and only if it is so with respect to $A/\mathrm{Ann}_AM.$ Thus we may assume that $M=A=A/\mathrm{Ann}_AM.$ Set $$T=(I_1/\mathfrak{m}I_1)^{k_1} \oplus\cdots\oplus(I_s/\mathfrak{m}I_s)^{k_s}\oplus(J/\mathfrak{m}J)^{k_0+1}.$$ By \cite[Lemma 17.3.2]{SH}, there exists a Zariski open subset $V\subseteq T$ such that $ \{y_1, \ldots, y_d\}$ is a joint reduction of $(I_1,\ldots,I_s,J)$ of the type $(k_1,\ldots,k_s, k_0+1)$ 
 if and only if the image $ (\overline{y}_1, \ldots, \overline{y}_d)$ of $(y_1, \ldots, y_d)$ in $T$ lies in $V.$
 Since $\{x_1, \ldots, x_d\}$ is a joint reduction of $(I_1,\ldots,I_s,J)$ of the type $(k_1,\ldots,k_s, k_0+1)$, $V$ is non-empty. Let $U_1$ be the subset of $I_1/\mathfrak{m}I_1$ consisting of elements $\overline{x}\; (x\in I_1)$ such that $(\overline{x},\overline{x}_2,\ldots,\overline{x}_d)\in V.$ By 
\cite[Lemma 8.5.12]{SH}, $U_1$ is a non-empty Zariski open subset of $I_1/\frak mI_1$. Now, assume  that $P_1,P_2,\ldots, P_t$ are all the prime ideals of $\mathrm{Ass}_A \dfrac{A}{(x_2, \ldots, x_d)}$ such that 
$$\dim_A\dfrac{A}{(x_2, \ldots, x_d)}  = \mathrm{Coht} P_j\quad (1 \le j \le t).$$  For each $j = 1, \ldots, t$, let $W_j$ be the image of $P_j \bigcap I_1$ in $I_1/\frak mI_1$. 
Since $x_1 \in I_1\setminus \bigcup_{j=1}^tP_j$,
$W_1, \ldots, W_t$ are proper $k$-vector subspaces of $I_1/\frak mI_1$ by Nakayama's lemma. Since $k$ is an infinite field,  
$U_2 = (I_1/\frak mI_1) \setminus \bigcup_{j=1}^tW_j$  
is a non-empty Zariski open subset of $I_1/\frak mI_1$. 
Observe that for any $y \in I_1$ with image $y + \frak mI_1 \in U_2,$        $\{y,x_2, \ldots, x_d\}$ is a system of parameters for $M.$ Set $U = U_1 \bigcap U_2.$ Then for any $x \in I_1$ with image $x + \frak mI_1 \in U,$  $\{x, x_2, \ldots, x_d\}$ is a joint reduction of $(I_1,\ldots,I_s,J)$ with respect to $M$ of the type $(k_1,\ldots,k_s, k_0+1)$ and $\{x,x_2, \ldots, x_d\}$ is a system of parameters for $M.$
The lemma has been proved.
\end{proof}
 
The following proposition shows that one can build a joint reduction $\frak R$ as in the state of the main theorem from a Rees's superficial sequence.
\enlargethispage{1cm}

\begin{proposition}\label{pr39}
Let $M$ be a finitely generated  $A$-module with $\dim M = d > 0$ and $I_1,\ldots, I_s$ ideals of $A.$ Set $$h=\max\Big\{\mathrm{ht}\Big(\dfrac{I_j+\mathrm{Ann}_AM} {\mathrm{Ann}_AM}\Big)\ |\ j=1,\ldots,s\Big\}.$$  Then the following statements hold.
\begin{enumerate}[\rm (i)]
\item There exists a Rees's superficial sequence $x_1,\ldots,x_h$ of $(I_1,\ldots, I_s)$ with respect to $M$ which is a part of system of parameters for $M.$
\item If $y_1,\ldots,y_m$ is a Rees's superficial sequence of $(I_1,\ldots, I_s)$ with respect to $M$  
and $I_1\cdots I_s\subseteq\sqrt{(y_1,\ldots,y_m)+\mathrm{Ann}_AM}$ $($in particular, if $y_1,\ldots,y_m$ is a system of parameters for $M )$ then $\{y_1,\ldots,y_m\}$ is a joint reduction of $(I_1,\ldots, I_s)$ with respect to $M$.
\end{enumerate}
\end{proposition}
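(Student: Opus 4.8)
The plan is to treat the two parts separately: part~(i) by an inductive construction resting on Lemma~\ref{lm32}, and part~(ii) by induction on the length $m$ of the sequence, with the condition (FC1) doing the essential work. Throughout I abbreviate $I=I_1\cdots I_s$ and $M_t=M/(x_1,\dots,x_t)M$.

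For (i) I would fix an index $j$ realizing the maximum, so that $\mathrm{ht}\big((I_j+\mathrm{Ann}_AM)/\mathrm{Ann}_AM\big)=h$, and build the whole sequence inside $I_j$. Suppose $x_1,\dots,x_{t-1}\in I_j$ have been chosen with $1\le t\le h$ so that they form a Rees's superficial sequence that is part of a system of parameters; then $\dim M_{t-1}=d-(t-1)$. Let $\Sigma$ be the finite set of those minimal primes $P$ of $\mathrm{Supp}_AM_{t-1}$ with $\dim A/P=\dim M_{t-1}$. The crucial point is that no $P\in\Sigma$ contains $I_j$: if $I_j\subseteq P$, then in $\bar A=A/\mathrm{Ann}_AM$ the prime $\bar P$ contains the image of $I_j$, so $\mathrm{ht}\,\bar P\ge h$, whereas $\dim\bar A/\bar P=\dim A/P=d-(t-1)$ together with $\dim\bar A=d$ forces $\mathrm{ht}\,\bar P\le t-1<h$, a contradiction. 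Hence Lemma~\ref{lm32}, applied to $M_{t-1}$ with this $\Sigma$ and the ideal $I_j$, yields $x_t\in I_j$ which is a Rees's superficial element of $(I_1,\dots,I_s)$ with respect to $M_{t-1}$ and avoids every member of $\Sigma$; avoidance forces $\dim M_t=d-t$, so $x_1,\dots,x_t$ is again a superficial sequence that is part of a system of parameters. Iterating up to $t=h$ (legitimate since $h\le\dim M=d$) produces the sequence required in (i). The point to get right here is precisely this height-versus-dimension bookkeeping guaranteeing $I_j\not\subseteq P$ at each stage.

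For (ii) I would first put the hypothesis into a usable shape: since $\mathrm{Ann}_AM\cdot M=0$, the inclusion $I\subseteq\sqrt{(y_1,\dots,y_m)+\mathrm{Ann}_AM}$ is equivalent to $I^NM\subseteq(y_1,\dots,y_m)M$ for some $N$. I then induct on $m$. The base case $m=0$ reads $I\subseteq\sqrt{\mathrm{Ann}_AM}$, whence $I_1^{n_1+1}\cdots I_s^{n_s+1}M=0$ for all large $n_1,\dots,n_s$, which is exactly the (empty) joint reduction identity. For the inductive step, write $y_1\in I_{i_1}$ and set $M_1=M/y_1M$. By definition the tail $y_2,\dots,y_m$ is a Rees's superficial sequence with respect to $M_1$, and reducing $I^NM\subseteq(y_1,\dots,y_m)M$ modulo $y_1M$ gives $I^NM_1\subseteq(y_2,\dots,y_m)M_1$, so the hypothesis persists for $M_1$. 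The induction hypothesis then yields, for all large $n_1,\dots,n_s$,
$$I_1^{n_1+1}\cdots I_s^{n_s+1}M\subseteq\sum_{i=1}^s(\mathfrak I_i')\,I_1^{n_1+1}\cdots I_i^{n_i}\cdots I_s^{n_s+1}M+y_1M,$$
where $\mathfrak I_i'=\{y_2,\dots,y_m\}\cap I_i$, so that $(\mathfrak I_{i_1})=(\mathfrak I_{i_1}')+(y_1)$ and $\mathfrak I_i=\mathfrak I_i'$ for $i\ne i_1$.

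The decisive step is to absorb the extra summand $y_1M$. Given $z\in I_1^{n_1+1}\cdots I_s^{n_s+1}M$, write $z=w+y_1u$ with $w$ in the displayed sum and $u\in M$; since each term $(\mathfrak I_i')I_1^{n_1+1}\cdots I_i^{n_i}\cdots I_s^{n_s+1}M$ lies inside $I_1^{n_1+1}\cdots I_s^{n_s+1}M$, it follows that $y_1u\in y_1M\cap I_1^{n_1+1}\cdots I_s^{n_s+1}M$. Applying (FC1) for $y_1\in I_{i_1}$ with the exponent $n_{i_1}+1$ taken large,
$$y_1M\cap I_1^{n_1+1}\cdots I_{i_1}^{n_{i_1}+1}\cdots I_s^{n_s+1}M=y_1\,I_1^{n_1+1}\cdots I_{i_1}^{n_{i_1}}\cdots I_s^{n_s+1}M,$$
so $y_1u\in(y_1)\,I_1^{n_1+1}\cdots I_{i_1}^{n_{i_1}}\cdots I_s^{n_s+1}M\subseteq(\mathfrak I_{i_1})\,I_1^{n_1+1}\cdots I_{i_1}^{n_{i_1}}\cdots I_s^{n_s+1}M$. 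Combined with $z=w+y_1u$ and the relations among the $\mathfrak I_i$ above, this shows $z\in\sum_{i=1}^s(\mathfrak I_i)\,I_1^{n_1+1}\cdots I_i^{n_i}\cdots I_s^{n_s+1}M$; the reverse inclusion is clear, so $\{y_1,\dots,y_m\}$ is a joint reduction with respect to $M$. I expect this absorption — matching the exponents so that (FC1) applies verbatim — to be the main obstacle, alongside checking that both the superficial-sequence structure and the reformulated hypothesis descend to $M_1$.
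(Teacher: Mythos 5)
Your proof is correct. For part (i) the mechanism is the same as the paper's (Lemma \ref{lm32} plus prime avoidance plus the height--coheight bound $\operatorname{ht}\bar P+\dim \bar A/\bar P\le\dim\bar A$), only organized as a direct induction inside a single $I_j$ attaining the maximum rather than as the paper's ``take a superficial sequence of maximal length and derive a contradiction if it is shorter than $h$''; that difference is cosmetic. Part (ii) is where you genuinely diverge: the paper first proves, by induction peeling off the \emph{last} element $y_m$ and working modulo $(y_1,\dots,y_{m-1})M$, the intersection identity $(y_1,\dots,y_m)M\cap I_1^{n_1+1}\cdots I_s^{n_s+1}M=\sum_i(\mathfrak I_i)I_1^{n_1+1}\cdots I_i^{n_i}\cdots I_s^{n_s+1}M$, which holds for \emph{any} Rees's superficial sequence with no radical hypothesis, and only invokes $I\subseteq\sqrt{(y_1,\dots,y_m)+\operatorname{Ann}_AM}$ at the very end to replace the intersection by the full product. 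You instead peel off the \emph{first} element, induct on the proposition statement itself applied to $M/y_1M$, and must therefore carry the (correctly reformulated and correctly descending) hypothesis $I^NM\subseteq(y_1,\dots,y_m)M$ through every level; the decisive (FC1) absorption of the stray $y_1M$ summand is the same move the paper makes for $y_m$. Your route is slightly shorter and self-contained; the paper's buys the stronger intermediate formula, which is what makes its statement (ii) a clean corollary for arbitrary superficial sequences satisfying the containment. The only points worth tightening are that your recursive appeal to (ii) for $M/y_1M$ implicitly drops the hypothesis $\dim M>0$ (harmless, since neither proof uses it in part (ii)), and that the base case $m=0$ should be read with the paper's convention $(\emptyset)=0_A$.
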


\begin{proof}
To prove (i), we may assume that $h>0.$ Let $x_1,\ldots,x_l$ be a Rees's superficial sequence of maximal length of $(I_1,\ldots, I_s)$ with respect to $M$ which is a part of system of parameters for $M.$ Set $M'=M/(x_1,\ldots,x_l)M$ and denote by $\Sigma$ the set of minimal prime ideals of $\mathrm{Ann}_AM'$ such that for any $P \in \Sigma,$ $\dim M' = \mathrm{Coht} P.$ \;  
  Since $$\mathrm{ht}\Big(\dfrac{I_i+\mathrm{Ann}_AM'} {\mathrm{Ann}_AM'}\Big)\geq\mathrm{ht}\Big(\dfrac{I_i+\mathrm{Ann}_AM} {\mathrm{Ann}_AM}\Big)-l,$$ $$\begin{aligned}
\max\Big\{\mathrm{ht}\Big(\dfrac{I_i+\mathrm{Ann}_AM'} {\mathrm{Ann}_AM'}\Big)\ |\ i=1,\ldots,s\Big\} &\geq\max\Big\{\mathrm{ht}\Big(\dfrac{I_i+\mathrm{Ann}_AM} {\mathrm{Ann}_AM}\Big)\ |\ i=1,\ldots,s\Big\}-l\\
&=h-l.
\end{aligned}$$
We now need to show that $l = h.$  Indeed, if $0\leq l<h$ then $h-l > 0.$ 
Hence there exists $I_j$ that is contained in no prime ideal belonging to $\Sigma$. By Lemma \ref{lm32}, there is a Rees's superficial element $x_{l+1}\in I_j$ of $(I_1,\ldots, I_s)$ with respect to $M'$ which does not belong to any element of $\Sigma$. It is easily seen that $x_1,\ldots,x_l,x_{l+1}$ is a Rees's superficial sequence of $(I_1,\ldots, I_s)$ with respect to $M$ which is also a part of system of parameters for $M$. This contradicts with  $x_1,\ldots,x_l$ is a sequence of maximal length. Hence $l = h.$ We obtain (i). 

The proof of (ii) is based on the idea in the proof of \cite[Theorem 3.4]{Vi3}. We first show by induction on $m$ that
\begin{equation}\label{eq1}
(y_1,\ldots,y_m)M\bigcap I_1^{n_1+1}\cdots I_s^{n_s+1}M = \sum_{i=1}^s(\frak I_i)
I_1^{n_1+1}\cdots I_i^{n_i}\cdots I_s^{n_s+1}M
\end{equation}
for all large $n_1,\ldots,n_s$, where $\frak{I}_i=\{y_1,\ldots,y_m\}\bigcap I_i,\ i=1,\ldots,s.$ The case $m=1$ is obvious. Assume now that $m>1$, that (\ref{eq1}) has been proved for $m-1$, and that $y_m\in I_s$. Set $\overline{M}=M/(y_1,\ldots,y_{m-1})M.$ Then $y_m$ is a Rees's superficial element of $(I_1,\ldots, I_s)$ with respect to $\overline{M}$, so
$$y_m\overline{M}\bigcap I_1^{n_1+1}\cdots I_s^{n_s+1}\overline{M} =  y_m
I_1^{n_1+1}\cdots I_{s-1}^{n_{s-1}+1}I_s^{n_s}\overline{M}$$
for all large $n_1,\ldots,n_s$. This implies that
$$(y_1,\ldots,y_m)M\bigcap I_1^{n_1+1}\cdots I_s^{n_s+1}M \subseteq y_m I_1^{n_1+1}\cdots I_{s-1}^{n_{s-1}+1}I_s^{n_s}{M}+(y_1,\ldots,y_{m-1})M,$$
and hence
$$\begin{aligned}
&(y_1,\ldots,y_m)M \bigcap I_1^{n_1+1}\cdots I_s^{n_s+1}M \\
\subseteq &(y_m I_1^{n_1+1}\cdots I_{s-1}^{n_{s-1}+1}I_s^{n_s}{M}+(y_1,\ldots,y_{m-1})M)
\bigcap I_1^{n_1+1}\cdots I_s^{n_s+1}M\\
=&\ y_m I_1^{n_1+1}\cdots I_{s-1}^{n_{s-1}+1}I_s^{n_s}{M}
+(y_1,\ldots,y_{m-1})M\bigcap I_1^{n_1+1}\cdots I_s^{n_s+1}M
\end{aligned}$$
for all large $n_1,\ldots,n_s$. By induction hypothesis,
$$(y_1,\ldots,y_{m-1})M\bigcap I_1^{n_1+1}\cdots I_s^{n_s+1}M=\sum_{i=1}^s(\frak I'_i)
I_1^{n_1+1}\cdots I_i^{n_i}\cdots I_s^{n_s+1}M$$
for all large $n_1,\ldots,n_s$, where $\frak{I}'_i=\{y_1,\ldots,y_{m-1}\}\bigcap I_i,\ i=1,\ldots,s.$ Note that $\frak{I}'_i\subseteq\frak{I}_i$ for $i=1,\ldots,s-1$ and $\frak{I}_s=\frak{I}'_s\bigcup\{y_m\}.$ Hence
$$\begin{aligned}
&(y_1,\ldots,y_m)M\bigcap I_1^{n_1+1}\cdots I_s^{n_s+1}M \\
\subseteq &\ y_m I_1^{n_1+1}\cdots I_{s-1}^{n_{s-1}+1}I_s^{n_s}{M}+\sum_{i=1}^s(\frak I'_i)
I_1^{n_1+1}\cdots I_i^{n_i}\cdots I_s^{n_s+1}M\\
\subseteq&\sum_{i=1}^s(\frak I_i)
I_1^{n_1+1}\cdots I_i^{n_i}\cdots I_s^{n_s+1}M
\end{aligned}$$
for all large $n_1,\ldots,n_s$. Since the reverse inclusion is obvious, we get (\ref{eq1}).

Now if $$I_1\cdots I_s\subseteq\sqrt{(y_1,\ldots,y_m)+\mathrm{Ann}_AM}$$ then $$I_1^{n_1+1}\cdots I_s^{n_s+1}M\subseteq (y_1,\ldots,y_m)M$$ for all large $n_1,\ldots,n_s$. It therefore follows from (\ref{eq1}) that $\{y_1,\ldots,y_m\}$ is a joint reduction of $(I_1,\ldots, I_s)$ with respect to $M$.
\end{proof}

As a consequence of Proposition \ref{pr39}, we obtain the following corollary. 

\begin{corollary}\label{co36}
 Let $M$ be a finitely generated  $A$-module with $\dim M = d > 0$, $J$ an $\frak m$-primary ideal, and $I_1,\ldots, I_s$ ideals of $A.$ Set $I= I_1\cdots I_s$. Assume that $$\mathrm{ht}\Big(\dfrac{I+\mathrm{Ann}_AM}{\mathrm{Ann}_AM}\Big) = h >0.$$ Let $k_0, k_1,\ldots,k_s$ be non-negative integers such that  
$$k_0+ k_1+\cdots+k_s = d-1\ \ and\ \ k_1+\cdots+k_s < h.$$
Then there exists a Rees's superficial sequence $x_1,\ldots,x_d$ of $(I_1,\ldots, I_s,J)$ with respect to $M$ of the type $(k_1,\ldots,k_s, k_0+1)$ which is a system of parameters for $M$, and in this case, $\{x_1,\ldots,x_d\}$  
is a joint reduction of $(I_1,\ldots, I_s, J)$ with respect to $M$.
\end{corollary}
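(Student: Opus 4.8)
The plan is to build the sequence $x_1,\ldots,x_d$ one element at a time by repeatedly invoking Lemma \ref{lm32} for the extended family $(I_1,\ldots,I_s,J)$, choosing the $j$-th element so that the partial sequence stays both a Rees's superficial sequence and a part of a system of parameters; once such a sequence of length $d=\dim M$ is in hand it is automatically a full system of parameters, and the joint-reduction conclusion then follows immediately from Proposition \ref{pr39}(ii). Throughout, for $0\le j\le d$ I write $M_j=M/(x_1,\ldots,x_j)M$ and let $\Sigma_j$ denote the set of minimal primes $P$ of $\mathrm{Ann}_AM_j$ with $\mathrm{Coht}\,P=\dim M_j$; avoiding $\Sigma_j$ when choosing $x_{j+1}$ is exactly what forces $\dim M_{j+1}=\dim M_j-1$, keeping the sequence a part of a system of parameters.

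The construction proceeds in two phases. In the first phase I pick, for $j=0,\ldots,r-1$ with $r=k_1+\cdots+k_s$, the element $x_{j+1}$ from the ideal $I_i$ prescribed by the type $(k_1,\ldots,k_s)$. The point is that this is possible whichever $i$ is required: since $I=I_1\cdots I_s\subseteq I_i$, monotonicity of height gives $\mathrm{ht}\big((I_i+\mathrm{Ann}_AM)/\mathrm{Ann}_AM\big)\ge h$, and killing one element lowers this height by at most one, so $\mathrm{ht}\big((I_i+\mathrm{Ann}_AM_j)/\mathrm{Ann}_AM_j\big)\ge h-j>0$ whenever $j<h$. Because $r<h$, this holds for every $j\le r-1$, so $I_i$ is contained in no prime of $\Sigma_j$; Lemma \ref{lm32}, applied to $(I_1,\ldots,I_s,J)$ with $\Sigma=\Sigma_j$, then yields a nonempty Zariski open subset of $I_i/\mathfrak mI_i$ whose elements are simultaneously Rees's superficial elements of $(I_1,\ldots,I_s,J)$ with respect to $M_j$ and lie outside every prime of $\Sigma_j$. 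Choosing $x_{j+1}$ from this set keeps the dimension dropping, so after $r$ steps I obtain a Rees's superficial sequence of type $(k_1,\ldots,k_s,0)$ that is a part of a system of parameters, with $\dim M_r=d-r$.

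In the second phase I pick $x_{r+1},\ldots,x_d$ from $J$. Here $J$ is $\mathfrak m$-primary, hence contained in no prime other than $\mathfrak m$; and for $j=r,\ldots,d-1$ one has $\dim M_j=d-j\ge1$, so every prime of $\Sigma_j$ has positive coheight and is therefore distinct from $\mathfrak m$. Thus $J$ lies in no prime of $\Sigma_j$, and Lemma \ref{lm32} again supplies an admissible choice $x_{j+1}\in J$. Since $k_0+1=d-r$, this gives exactly $k_0+1$ elements of $J$, and the full sequence $x_1,\ldots,x_d$ is a Rees's superficial sequence of $(I_1,\ldots,I_s,J)$ with respect to $M$ of type $(k_1,\ldots,k_s,k_0+1)$ which is a part of a system of parameters of length $d=\dim M$, that is, a system of parameters for $M$.

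Finally, because $\{x_1,\ldots,x_d\}$ is a system of parameters, $I_1\cdots I_sJ\subseteq\mathfrak m\subseteq\sqrt{(x_1,\ldots,x_d)+\mathrm{Ann}_AM}$, so Proposition \ref{pr39}(ii), applied to the family $(I_1,\ldots,I_s,J)$, shows that $\{x_1,\ldots,x_d\}$ is a joint reduction of $(I_1,\ldots,I_s,J)$ with respect to $M$. I expect the only real obstacle to be verifying that the prescribed $I_i$ can indeed be used at every step of the first phase without destroying the system-of-parameters property; this is precisely where the hypothesis $k_1+\cdots+k_s<h$ enters, since it is exactly what guarantees $\mathrm{ht}\big((I_i+\mathrm{Ann}_AM_j)/\mathrm{Ann}_AM_j\big)>0$ at each of the first $r$ steps.
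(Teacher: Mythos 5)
Your proof is correct and follows essentially the same route as the paper: use the hypothesis $k_1+\cdots+k_s<h$ together with the height estimate $\mathrm{ht}\big((I_i+\mathrm{Ann}_AM_j)/\mathrm{Ann}_AM_j\big)\ge h-j>0$ to keep Lemma \ref{lm32} applicable while picking the first $r$ elements from the prescribed $I_i$'s, finish with elements of the $\mathfrak m$-primary ideal $J$, and invoke Proposition \ref{pr39}(ii) for the joint-reduction conclusion. The only cosmetic difference is that you build the sequence greedily step by step where the paper argues via a maximal-length sequence and delegates the $J$-phase to Proposition \ref{pr39}(i).
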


\begin{proof} Set $t=k_1+\cdots+k_s.$
We first show that there exists a Rees's superficial sequence $x_1,\ldots,x_t$ of $(I_1,\ldots, I_s,J)$ with respect to $M$ of the type $(k_1,\ldots,k_s, 0)$ which is a part of system of parameters for $M$. Assume that we have built a Rees's superficial sequence of maximal length  $x_1,\ldots,x_l$ of $(I_1,\ldots, I_s,J)$ with respect to $M$ of the type $(l_1,\ldots,l_s, 0)$ which is a part of system of parameters for $M$, where $0\leq l_i\leq k_i$ for $i=1,\ldots,s$ and $l=l_1+\cdots+l_s.$ Set $M'=M/(x_1,\ldots,x_l)M.$ Note that
$$\begin{aligned}
\mathrm{ht}\Big(\dfrac{I_i+\mathrm{Ann}_AM'} {\mathrm{Ann}_AM'}\Big)&\geq\mathrm{ht}\Big(\dfrac{I_i+\mathrm{Ann}_AM} {\mathrm{Ann}_AM}\Big)-l\\
&\geq\mathrm{ht}\Big(\dfrac{I+\mathrm{Ann}_AM} {\mathrm{Ann}_AM}\Big)-l\\
&=h-l \geq h-t>0
\end{aligned}$$
for every $i=1,\ldots,s.$ We need to show that $l_i = k_i$ for $i=1,\ldots, s$ and hence $l = h.$ 
Indeed, if there is $l_j<k_j,$ then with the same argument as in the proof of Proposition \ref{pr39}(i), we can find an element $x_{l+1}\in I_j$ such that $x_1,\ldots,x_l,x_{l+1}$ is a Rees's superficial sequence of $(I_1,\ldots, I_s,J)$ with respect to $M$ of the type $(l_1,\ldots,l_j+1,\ldots,l_s, 0)$ which is also a part of system of parameters for $M$. This contradicts with the maximum of sequence $x_1,\ldots,x_l$. Hence  $l_i = k_i$ for $i=1,\ldots, s.$      
We obtain a sequence $x_1,\ldots,x_t$ as required. Put $\overline{M}=\dfrac{M}{(x_1,\ldots,x_t)M}.$ Since $\mathrm{ht}\Big(\dfrac{J+\mathrm{Ann}_A\overline{M}} {\mathrm{Ann}_A\overline{M}}\Big)=d-t=k_0+1,$ it follows from Proposition \ref{pr39}(i) that there exists a Rees's superficial sequence $x_{t+1},\ldots,x_d\in J$ of $(I_1,\ldots, I_s,J)$ with respect to $ \overline{M}$ of the type $(0,\ldots,0,k_0+1)$ which is a system of parameters for $ \overline{M}$. Thus we get a sequence $x_1,\ldots,x_d$ that is a Rees's superficial sequence of $(I_1,\ldots, I_s,J)$ with respect to ${M}$ of the type $(k_1,\ldots,k_s,k_0+1)$ and this sequence is also a system of parameters for $M$. By
 Proposition \ref{pr39}(ii),
 $\{x_1,\ldots,x_d\}$  
is a joint reduction of $(I_1,\ldots, I_s, J)$ with respect to $M.$ 
\end{proof}
  
\section{Mixed Multiplicities of Ideals}

Let $M$ be a finitely generated  $A$-module of dimension $d>0$. Let $J$ be an $\frak m$-primary ideal, and
$I_1,\ldots,I_s$ ideals such that $I= I_1\cdots I_s$  is not contained in $ \sqrt{\mathrm{Ann}_A{M}}$. 
 Set $\dim M/0_M:I^\infty = q.$  Recall that by  \cite[Proposition 3.1]{Vi} (see also \cite {MV}),    
$$\ell_A\Big(\frac{J^{n_0}I_1^{n_1}\cdots I_s^{n_s}M}{J^{n_0 + 1}I_1^{n_1}\cdots I_s^{n_s}M}\Big)$$
is a polynomial of total degree $q-1$ for all large $n_0,n_1,\ldots, n_s.$     Write the terms of total degree $q-1$ in this polynomial in the form 
$$\sum_{k_0 + k_1 + \cdots + k_s = q - 1} e_A(J^{[k_0+1]},I_1^{[k_1]}, \ldots,I_s^{ [k_s]}, M)\frac{n_0^{k_0}n_1^{k_1}\cdots n_s^{k_s}}{k_0!k_1! \cdots k_s!},$$
then $e_A(J^{[k_0+1]},I_1^{[k_1]}, \ldots,I_s^{ [k_s]}, M)$ are non-negative integers not all zero. One calls  $e_A(J^{[k_0+1]},I_1^{[k_1]}, \ldots,I_s^{ [k_s]}, M)$ the 
{\it mixed multiplicity of $(I_1,\ldots, I_s,J)$ with respect to $M$ of the type $( k_1, \ldots, k_s,k_0+1)$}.
It is easily seen that if $\mathrm{ht}\Big(\dfrac{I+\mathrm{Ann}_AM}{\mathrm{Ann}_AM}\Big)  >0$
then $\dim M/0_M:I^\infty = \dim M = d.$

  In this section, we show that  mixed multiplicities of $(I_1,\ldots,I_s, J)$ with respect to $M$ can be expressed as the multiplicities of ideals generated by joint reductions of them. From this we get the Rees's theorem on mixed multiplicity as a consequence. 
  Moreover,  we also obtain a formula that allows us to compute mixed multiplicities in terms of the multiplicities of ideals generated by Rees's superficial sequences.

The main result of this paper is the following theorem. This was established by Rees for the case of $\frak m$-primary ideals in \cite[Theorem 2.4]{Re}.  
\begin{theorem}\label{th37}
 Let $M$ be a finitely generated $A$-module of Krull dimension $d > 0$. Let $J$ be an $\frak m$-primary ideal, and $I_1,\ldots, I_s$   ideals of $A.$ Set $I= I_1\cdots I_s$. Assume that $\mathrm{ht}\Big(\dfrac{I+\mathrm{Ann}_AM}{\mathrm{Ann}_AM}\Big) = h >0$ and $k_0, k_1,\ldots,k_s$ are non-negative integers such that $k_0+ k_1+\cdots+k_s = d-1$ and $ k_1+\cdots+k_s < h.$ 
Let $\frak R= \{x_1, \ldots, x_d\}$ be a joint reduction of 
 $(I_1,\ldots,I_s,J)$ with respect to $M$ of the type $(k_1,\ldots,k_s, k_0+1)$ such that     
$\frak R$ is a system of parameters for $M.$ Then $$e_A(J^{[k_0+1]}, I_1^{[k_1]},\ldots, I_s^{[k_s]}, M) = e_A(\frak R, M).$$
\end{theorem}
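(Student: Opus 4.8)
The plan is to move the given joint reduction into the more rigid class of weak-(FC)-sequences, where the mixed multiplicity is computable from the (FC)-theory of \cite{Vi}, while verifying that the Hilbert-Samuel multiplicity $e_A(\mathfrak R,M)$ of the parameter ideal $(\mathfrak R)$ is unaffected by this move. First I would reduce to the case $A=A/\mathrm{Ann}_AM$ and $M=A$, as in the proof of Lemma \ref{lm36}, via \cite[Lemma 17.1.4]{SH}: both sides of the asserted identity are unchanged under this passage. Since $\mathrm{ht}\big((I+\mathrm{Ann}_AM)/\mathrm{Ann}_AM\big)=h>0$, we have $\dim M/(0_M:I^\infty)=d$, so the mixed multiplicity sits in total degree $d-1=k_0+\cdots+k_s$, consistent with the type. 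Write $t=k_1+\cdots+k_s$, order $\mathfrak R=\{x_1,\ldots,x_d\}$ so that $x_1,\ldots,x_t$ lie in $I_1,\ldots,I_s$ and $x_{t+1},\ldots,x_d$ lie in $J$, and set $M'=M/(x_1,\ldots,x_t)M$; as $\mathfrak R$ is a system of parameters, $\dim M'=d-t=k_0+1$.

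Next I would normalize $\mathfrak R$, perturbing one coordinate at a time, to a joint reduction of the same type that is simultaneously a weak-(FC)-sequence and a system of parameters. For a coordinate $x_i\in I_j$, Lemma \ref{lm36} provides a non-empty Zariski open subset of $I_j/\mathfrak mI_j$ of replacements preserving both the joint-reduction and the system-of-parameters properties, while Proposition \ref{lm33} provides a non-empty Zariski open subset of weak-(FC)-elements of $(I_1,\ldots,I_s,J)$ for the appropriate quotient module; over the infinite field $k$ their intersection is non-empty, so a common generic replacement exists. Crucially, such a replacement leaves $e_A(\mathfrak R,M)$ unchanged: cutting $M$ down by the other $d-1$ parameters reduces this to a one-dimensional module on which the old and new coordinate are parameters in the same ideal $I_j$, both generating reductions forced by the joint-reduction condition, so their parameter multiplicities agree. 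Iterating, I may therefore assume that $x_1,\ldots,x_t$ is a weak-(FC)-sequence of $(I_1,\ldots,I_s,J)$ with respect to $M$ of type $(k_1,\ldots,k_s,0)$.

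With this normalization the reduction machinery of \cite{Vi} applies: peeling off $x_1,\ldots,x_t$ successively and using their properties (FC1) and (FC2), each step lowers one index among $k_1,\ldots,k_s$ and passes to the corresponding quotient, yielding $e_A(J^{[k_0+1]},I_1^{[k_1]},\ldots,I_s^{[k_s]},M)=e_A(J;M')$, the Hilbert-Samuel multiplicity of the $\mathfrak m$-primary ideal $J$ on the $(k_0+1)$-dimensional module $M'$. Here the hypothesis $t=k_1+\cdots+k_s<h$ is exactly what prevents $I$ from falling into $\sqrt{\mathrm{Ann}M'}$ before the $t$-th step, keeping every reduction legitimate; its necessity is shown by Remark \ref{rm35}. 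Reading the joint-reduction property of $\mathfrak R$ modulo $(x_1,\ldots,x_t)$ shows that $(x_{t+1},\ldots,x_d)$ is a reduction of $J$ on $M'$, so $e_A(J;M')=e_A\big((x_{t+1},\ldots,x_d);M'\big)$; and by the associativity formula for multiplicities of parameter ideals (valid since $x_1,\ldots,x_t$ is a superficial sequence for $(\mathfrak R)$) one gets $e_A\big((x_{t+1},\ldots,x_d);M'\big)=e_A(\mathfrak R,M)$, closing the chain.

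The step I expect to be the main obstacle is the multiplicity invariance in the second paragraph, which is where the asymptotic notion of joint reduction must be genuinely reconciled with the analytic notion of weak-(FC)-sequence. Lemma \ref{lm36} keeps genericity inside the class of joint reductions that are systems of parameters, but one still has to prove that the multiplicity of the generated parameter ideal is constant across the relevant Zariski open set; the one-dimensional reduction indicated above, which forces each perturbed coordinate to generate a reduction of a fixed ideal, is the delicate point, and it is precisely here that the joint-reduction hypothesis---rather than the mere system-of-parameters hypothesis---is essential.
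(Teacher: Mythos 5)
There are genuine gaps. First, your opening reduction to $M=A=A/\mathrm{Ann}_AM$ is not legitimate for this statement: \cite[Lemma 17.1.4]{SH} says the \emph{joint-reduction property} only depends on $A/\mathrm{Ann}_AM$, but neither $e_A(\frak R,M)$ nor the mixed multiplicity is unchanged when $M$ is replaced by $A/\mathrm{Ann}_AM$ (already $M=A^2$ shows this). The paper instead passes to the quotient domains $B=A/\mathfrak p$, $\mathfrak p$ a minimal prime of $\mathrm{Ann}_AM$ with $\dim A/\mathfrak p=d$, and reassembles via the two additivity formulas $e_A(\frak R,M)=\sum_{\mathfrak p}\ell_A(M_{\mathfrak p})e_A(\frak R,A/\mathfrak p)$ \cite[Theorem 11.2.4]{SH} and the analogue for mixed multiplicities \cite[Corollary 3.6]{VT1}; it also has to check (Remark \ref{rm34}) that the height hypothesis survives the passage to $A/\mathfrak p$. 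This step is not cosmetic: it is what makes every element of $\frak R$ a nonzerodivisor, which is needed below.

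Second, the multiplicity-invariance you flag as the delicate point is indeed where your argument breaks. You propose to compare $e_A(\{x,x_2,\ldots,x_d\},M)$ with $e_A(\{x_1,\ldots,x_d\},M)$ by cutting $M$ down by the other $d-1$ parameters to a one-dimensional module; but the multiplicity of a parameter ideal is \emph{not} preserved under such a quotient in general. By Auslander--Buchsbaum \cite{AB}, $e_A(x_1,\ldots,x_d;M)=e_A(x_2,\ldots,x_d;M/x_1M)-e_A(x_2,\ldots,x_d;0_M:x_1)$, so equality of the first two terms requires control of $0_M:x_1$ --- which is exactly what working over a domain $B=A/\mathfrak p$ provides and what a general module does not. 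The paper's comparison of $e(x,x_2,\ldots,x_d,B)$ with $e(x_1,\ldots,x_d,B)$ cuts by a \emph{single} nonzerodivisor $x_d$ and then invokes the induction hypothesis on $B/x_dB$ to identify both quantities with one mixed multiplicity; your one-dimensional reduction replaces this with an unproved claim. The same issue infects your last step: a Rees superficial sequence for $(I_1,\ldots,I_s,J)$ is not a superficial sequence for the parameter ideal $(\frak R)$, so the asserted associativity $e_A\big((x_{t+1},\ldots,x_d);M'\big)=e_A(\frak R,M)$ is unjustified. Finally, when you peel off $x_1,\ldots,x_t$ using \cite{Vi}, you need these to be (FC)-elements, not merely weak-(FC)-elements; the paper secures this from the nonvanishing of the relevant mixed multiplicity via \cite[Proposition 3.1]{Vi2}, a verification your sketch omits. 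The overall plan (generic perturbation into weak-(FC) position, peeling down to the $J$-only case, Northcott--Rees) matches the paper's, but these three steps need the paper's domain reduction and double induction to go through.
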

\begin{proof} First, we consider the following fact.

\begin{remark} \label{rm314} If $k_1=\cdots=k_s=0$ then  $\frak R \subset J$ and 
 $\frak R$ is a joint reduction of $(I_1,\ldots,I_s,J)$ with respect to $M$ of the type $(0,\ldots,0, d).$  Hence there exists an integer $u$ such that $J^{n+1}I^uM=J^{n+1}I_1^u\cdots I_s^uM= (\frak R)J^{n}I_1^u\cdots I_s^uM= (\frak R)J^{n}I^uM$ for all $n\geqslant u.$ This means that $(\frak R)$ is a reduction of $J$ with respect to $I^uM$ by \cite{NR}. So by \cite[Theorem 1]{NR} we have 
\begin{equation}\label{eq2}
e_A(J,I^uM)=e_A((\frak R),I^uM). 
\end{equation}
Since $\mathrm{ht}\Big(\dfrac{I+\mathrm{Ann}_AM}{\mathrm{Ann}_AM}\Big) >0,$ it follows that $\dim (M/I^uM)<\dim M.$  Hence  
\begin{equation}\label{eq3}
e_A(J, M)=e_A(J,I^uM) \;\;\mathrm{ and }\;\; e_A((\frak R),M)=e_A((\frak R),I^uM).
\end{equation}  
By (\ref{eq2}) and (\ref{eq3}) we obtain   
$e_A(J, M)= e_A((\frak R),M).$ Remember that $\frak R$ is a system of parameters for $M,$     
$e_A((\frak R), M)=e_A(\frak R, M).$ 
Consequently $e_A(J, M)= e_A(\frak R,M).$ Since $\mathrm{ht}\Big(\dfrac{I+\mathrm{Ann}_AM}{\mathrm{Ann}_AM}\Big) >0,$
by \cite[Lemma 3.2(ii)]{Vi} (see also \cite[Lemma 3.2(ii)]{MV}) we get
$e_A(J^{[d]}, I_1^{[0]},\ldots, I_s^{[0]},M)=e_A(J,M).$
Therefore,
$e_A(J^{[d]}, I_1^{[0]},\ldots, I_s^{[0]},M)=e_A(\frak R,M).$
\end{remark}

 We now prove the theorem by induction on $d$ that if 
$\frak R$ is a joint reduction of 
 $(I_1,\ldots,I_s,J)$ with respect to $M$ of the type $(k_1,\ldots,k_s, k_0+1)$ such that     
$\frak R$ is a system of parameters for $M$ then $e_A(J^{[k_0+1]}, I_1^{[k_1]},\ldots, I_s^{[k_s]}, M) = e_A(\frak R, M).$
   If $d = 1$ then $k_1=\cdots=k_s=0.$ It follows from  Remark \ref{rm314} that $e_A(J^{[1]}, I_1^{[0]},\ldots, I_s^{[0]},M)=e_A(\frak R,M).$

Next, consider the case $d>1$. Since $\frak R$ is a joint reduction of $(I_1,\ldots,I_s,J)$ with respect to $M$ of the type $(k_1,\ldots,k_s, k_0+1)$, it is also a joint reduction of $(I_1,\ldots,I_s,J)$ with respect to $A/\mathfrak{p}$ of the type $(k_1,\ldots,k_s, k_0+1)$, where $\mathfrak{p}$ is a minimal prime ideals of $\mathrm{Ann}_AM$ (see e.g \cite[Lemma 17.1.4]{SH}). Denote by $\Lambda$ the set of minimal prime ideals $\mathfrak{p}$ of $\mathrm{Ann}_AM$ such that $\dim A/\mathfrak{p}=d.$ For any $\mathfrak{p} \in \Lambda,$ set $B = A/\mathfrak{p},$ then $B$ is an $A$-module with $\dim B = d$ and $\frak R$ is a system of parameters for $B.$ We show that the theorem is true for $B$, i.e.,
\begin{equation}\label{eq4}
 e_A(J^{[k_0+1]}, I_1^{[k_1]},\ldots, I_s^{[k_s]}, B) = e_A(\frak R, B).
\end{equation}
We need the following  comment. 

\begin{remark} \label{rm34}
If $\mathfrak{p}\in\Lambda$ then $\mathrm{ht}\Big(\dfrac{I+\mathfrak{p}}{\mathfrak{p}}\Big) \geqslant h$. Indeed, without loss of generality, we may assume that $\mathrm{Ann}_AM=0.$
 It is enough to show that there are $h$ elements $a_1,\ldots,a_h$ in $I$ such that 
\begin{equation}\label{eq5}
\mathrm{ht}(a_1,\ldots,a_h) =\mathrm{ht}\Big(\dfrac{(a_1,\ldots,a_h)+\mathfrak{p}} {\mathfrak{p}}\Big) =h.
\end{equation}
If $h=1$, this is trivial since $I\nsubseteq\mathfrak{p}$. Assume that $h>1$ and we have found $a_1,\ldots,a_{h-1}\in I$ satisfying $\mathrm{ht}(a_1,\ldots,a_{h-1}) =\mathrm{ht}\Big(\dfrac{(a_1,\ldots,a_{h-1})+\mathfrak{p}} {\mathfrak{p}}\Big) =h-1.$ We claim that $\mathrm{ht}((a_1,\ldots,a_{h-1})+\mathfrak{p})=h-1$. Indeed, if $\mathrm{ht}((a_1,\ldots,a_{h-1})+\mathfrak{p})>h-1$ then we can choose $b\in\mathfrak{p}$ such that $\mathrm{ht}(a_1,\ldots,a_{h-1},b) = h$, and hence there exist $a_{h+1},\ldots,a_d$ in $A$ such that $\mathfrak{q}=(a_1,\ldots,a_{h-1},b,a_{h+1},\ldots,a_d)$ is an $\mathfrak{m}$-primary ideal. In this case, the image of $\mathfrak{q}$ in $A/\mathfrak{p}$ is primary to the maximal ideal $\mathfrak{m}/\mathfrak{p}$. But this contradicts with $\dim A/\mathfrak{p}=d$ since
$$\mathrm{ht}\Big(\dfrac{\mathfrak{q}+\mathfrak{p}} {\mathfrak{p}}\Big) =\mathrm{ht}\Big(\dfrac{(a_1,\ldots,a_{h-1},a_{h+1},\ldots,a_d)+\mathfrak{p}} {\mathfrak{p}}\Big)\leqslant d-1.$$
So $\mathrm{ht}((a_1,\ldots,a_{h-1})+\mathfrak{p})=h-1$. Thus we can choose $a_h\in I$ that avoids all the minimal prime ideals of $(a_1,\ldots,a_{h-1})$ and of $(a_1,\ldots,a_{h-1})+\mathfrak{p}.$ Observe that the elements $a_1,\ldots,a_h$ satisfy (\ref{eq5}).
\end{remark}

Return to the proof of the theorem. Since $\mathrm{Ann}_AB = \mathfrak{p},$ 
$$ k_1+\cdots+k_s < h \le \mathrm{ht}\Big(\dfrac{I+\mathrm{Ann}_AB}{\mathrm{Ann}_AB}\Big)$$
by Remark \ref{rm34}. Consequently, $B$ satisfies the assumptions of the theorem.
By \cite[Proposition 3.1(vi)]{Vi2} (see also \cite[Theorem 3.6(ii)]{MV}), $e_A(J^{[k_0+1]},             I_1^{[k_1]},\ldots, I_s^{[k_s]}, B) \ne 0.$ If $k_1=\cdots=k_s=0$ then (\ref{eq4}) is true by Remark \ref{rm314}. If    
 $k_1,\ldots,k_s$ are not all zero we may assume that $k_1>0$ and $x_1\in I_1$. By Proposition \ref{lm33}, there is a non-empty Zariski open subset $U_1$ of $I_1/\frak{m}I_1$ such that if $y\in I_1$ with image $y+\frak{m}I_1\in U_1$ then $y$ is a weak-(FC)-element of $(I_1,\ldots,I_s,J)$ with respect to $B$. Moreover, by Lemma \ref{lm36}, there also exists a non-empty Zariski open subset $U_2$ of $I_1/\frak{m}I_1$ such that whenever $z\in I_1$ with image $z+\frak{m}I_1\in U_2,$ then $\{z,x_2,\ldots,x_d\}$ is a joint reduction of $(I_1,\ldots,I_s,J)$ with respect to $B$ and $\{z,x_2,\ldots,x_d\}$ is  a system of parameters for $B.$ Now choose $x\in I_1$ such that $x +\frak{m}I_1\in U_1\bigcap U_2$ then $x$ is a weak-(FC)-element of $(I_1,\ldots,I_s,J)$ with respect to $B$ and $\{x,x_2,\ldots,x_d\}$ is a joint reduction of $(I_1,\ldots,I_s,J)$ with respect to $B$ of the type $(k_1,\ldots,k_s, k_0+1)$ and $\{x,x_2,\ldots,x_d\}$ is also  a system of parameters for $B.$ Set $\overline{B}=B/xB$.  Since $x$ is a weak-(FC)-element and $e_A(J^{[k_0+1]}, I_1^{[k_1]},\ldots, I_s^{[k_s]}, B) \ne 0$, $x$ is an (FC)-element by
\cite[Proposition\ 3.1(i)]{Vi2}. Hence 
 by \cite[Proposition\ 3.3]{Vi} (see also \cite{DV} and  \cite[Proposition\ 3.3]{MV})
 we have $$e_A(J^{[k_0+1]}, I_1^{[k_1]},\ldots, I_s^{[k_s]}, B)=e_{A}(J^{[k_0+1]}, I_1^{[k_1-1]},\ldots, I_s^{[k_s]}, \overline{B}).$$ Note that $\{x,x_2,\ldots,x_d\}$ is a joint reduction of $(I_1,\ldots,I_s,J)$ with respect to $B$ of the type $(k_1,\ldots,k_s, k_0+1),$ $\{x_2,\ldots,x_d\}$ is a joint reduction of $(I_1,\ldots,I_s,J)$ with respect to $\overline{B}$ of the type $(k_1-1,\ldots,k_s, k_0+1)$. Moreover, since $\{x,x_2,\ldots,x_d\}$ is a system of parameters for $B,$ $\dim \overline{B} = d-1$ and $\{x_2,\ldots,x_d\}$ is a system of parameters for $\overline{B}.$ Note also that $\mathrm{ht}\Big(\dfrac{I+\mathrm{Ann}_A\overline{B}}{\mathrm{Ann}_A\overline{B}}\Big)\geq \mathrm{ht}\Big(\dfrac{I+\mathrm{Ann}_AB}{\mathrm{Ann}_AB}\Big) -1>(k_1-1)+k_2+\cdots+k_s$. Hence by induction hypothesis,
$e_{A}(J^{[k_0+1]}, I_1^{[k_1-1]},\ldots, I_s^{[k_s]}, \overline{B})
= e_{A}(x_2,\ldots,x_d,\overline{B})
$. Since $x\not\in\mathfrak{p},$  $x$ is not  a zero divisor on $B.$ Therefore,   
$$ e_{A}(x_2,\ldots,x_d,\overline{B}) = e_{A}(x, x_2,\ldots,x_d,B)$$ by \cite{AB}.
Thus, $e_{A}(J^{[k_0+1]}, I_1^{[k_1]},\ldots, I_s^{[k_s]}, B)
= e_{A}(x, x_2,\ldots,x_d,B).$
We now show that $$e_{A}(x,x_2,\ldots,x_d,B)=e_{A}(x_1,x_2,\ldots,x_d,B).$$ Put $t=\max\{k_0,k_1-1,k_2,\ldots,k_s\}.$ 
 If $t>0$ we may assume that $k_i>0$ (or $k_i>1$ if $i=1$) and $x_d\in I_i.$ Set $B'=B/x_dB$. Then $\{x,x_2,\ldots,x_{d-1}\}$ and $\{x_1,x_2,\ldots,x_{d-1}\}$ are both systems of parameters for $B'.$ Moreover, they are both joint reductions of $(I_1,\ldots,I_s,J)$ with respect to $B'$ of the type $(k_1,\ldots,k_i-1,\ldots,k_s, k_0+1)$. Similar as above, we also have $\mathrm{ht}\Big(\dfrac{I+\mathrm{Ann}_AB'}{\mathrm{Ann}_AB'}\Big) >k_1+\cdots+k_i-1+\cdots+k_s.$
 So by induction hypothesis,
$$\begin{aligned}
e_{A}(x,x_2,\ldots,x_{d-1},B')&=e_{A}(J^{[k_0+1]}, I_1^{[k_1]},\ldots, I_i^{[k_i-1]},\ldots, I_s^{[k_s]}, B')\\
&=e_{A}(x_1,x_2,\ldots,x_{d-1},B').
\end{aligned}$$
Since $x_d$ is also not  a zero divisor of $B$, by \cite{AB} we have
$$\begin{aligned}
e_{A}(x,x_2,\ldots,x_{d-1},B')&=e_{A}(x,x_2,\ldots,x_d,B),\\
e_{A}(x_1,x_2,\ldots,x_{d-1},B')&=e_{A}(x_1,x_2,\ldots,x_d,B).
\end{aligned}$$
Therefore, $e_{A}(x,x_2,\ldots,x_d,B)=e_{A}(x_1,x_2,\ldots,x_d,B).$ 

If $t=0$ then $k_0=k_2=\cdots=k_s=0,\ k_1=1$, and $d=2.$ In this case, $x_2\in J$ and $\dim B/x_2B = 1.$  Since $\mathrm{ht}\Big(\dfrac{I+\mathfrak{p}}{\mathfrak{p}}\Big) \geqslant h>k_1
+\cdots+k_s=1,$ $I_1,\ldots,I_s$ are ideals of definition of $B$. It follows that $I_1,\ldots,I_s$ are ideals of definition of $B/x_2B$. As $\dim B/x_2B = 1$, applying Remark \ref{rm314}  for modules of dimension 1 with $I_1$ playing the role of $J,$     
 we get
$$e_{A}(x,B/x_2B)=e_A(J^{[0]}, I_1^{[1]}, I_2^{[0]},\ldots, I_s^{[0]}, B/x_2B)=e_{A}(x_1,B/x_2B),$$
 and hence $e_{A}(x,x_2,B)=e_{A}(x_1,x_2,B).$ By the above  results,  we obtain  $$e_{A}(x,x_2,\ldots,x_d,B)=e_{A}(x_1,x_2,\ldots,x_d,B).$$ 
Hence  
$e_{A}(J^{[k_0+1]}, I_1^{[k_1]},\ldots, I_s^{[k_s]}, B) =
e_{A}(x_1,x_2,\ldots,x_d,B).$ This proves (\ref{eq4}).
Thus for any $\mathfrak{p} \in \Lambda,$ we have $e_{A}(J^{[k_0+1]}, I_1^{[k_1]},\ldots, I_s^{[k_s]}, A/\mathfrak{p}) =
e_{A}(x_1,x_2,\ldots,x_d,A/\mathfrak{p}).$ Consequently,    
$$\sum_{\mathfrak{p}\in \Lambda}\ell_A(M_{\mathfrak{p}})e_{A}(J^{[k_0+1]}, I_1^{[k_1]},\ldots, I_s^{[k_s]}, A/\mathfrak{p}) = \sum_{\mathfrak{p}\in \Lambda}\ell_A(M_{\mathfrak{p}})e_{A}(x_1,x_2,\ldots,x_d, A/\mathfrak{p}).$$ Remember that $e_A(x_1,\ldots,x_d, M) =\sum_{\mathfrak{p}\in\Lambda}\ell_A(M_{\mathfrak{p}})e_A(x_1,\ldots,x_d, A/\mathfrak{p})$ (see \cite[Theorem 11.2.4]{SH}). 
Now by \cite[Corollary 3.6]{VT1}, $$e_A(J^{[k_0+1]}, I_1^{[k_1]},\ldots, I_s^{[k_s]}, M) =\sum_{\mathfrak{p}\in \Lambda}\ell_A(M_{\mathfrak{p}})e_{A}(J^{[k_0+1]}, I_1^{[k_1]},\ldots, I_s^{[k_s]}, A/\mathfrak{p}).$$ 
Therefore, $e_A(J^{[k_0+1]}, I_1^{[k_1]},\ldots, I_s^{[k_s]}, M)= e_A(x_1,\ldots,x_d, M).$
The proof is complete.
\end{proof}

\begin{remark}\label{rm20} Keep the notation of Theorem \ref{th37}. Now if $x_1, \ldots, x_d$ is   
    a  Rees's superficial sequence of 
 $(I_1,\ldots,I_s,J)$ with respect to $M$ of the type $(k_1,\ldots,k_s, k_0+1)$ that     
 is a system of parameters for $M$ then  $\{x_1, \ldots, x_d\}$ is a joint reduction  of 
 $(I_1,\ldots,I_s,J)$ with respect to $M$ of the type $(k_1,\ldots,k_s, k_0+1)$ by Corollary \ref {co36}. 
Hence $e_A(J^{[k_0+1]}, I_1^{[k_1]},\ldots, I_s^{[k_s]}, M)= e_A(x_1,\ldots,x_d, M)$ by Theorem \ref{th37}.

\end{remark}

\begin{remark}\label{rm35}
From Theorem \ref{th37}, one may raise a question:
 Does the theorem hold if $ k_1+\cdots+k_s \geqslant h$ ?
 Consider the case $s=1$ and $M=A.$ Let $I_1=I$ be an equimultiple ideal of $A$, i.e., an ideal such that $\mathrm{ht}I=s(I)$, where $s(I)=\dim \bigoplus_{n\geqslant0}I^n/\frak{m}I^n.$ If $\mathrm{ht}I=h>0$ then by \cite[Theorem 3.1(iii)]{Vi4}, $e_A(J^{[d-i]}, I^{[i]})=0$ for all $i\geqslant h.$ Therefore, when $i\geqslant h$, $e_A(J^{[d-i]}, I^{[i]})$ can not be multiplicity of any system of parameters. This example shows that Theorem \ref{th37} does not hold in general if one omits  the assumption $ k_1+\cdots+k_s < h$. 
\end{remark}

Finally, we recover Rees's theorem, which is a motivation for this paper.
 \begin{corollary}[$\mathrm{\cite[Theorem\ 2.4(i),(ii)]{Re}}$]\label{co34}
 Let $M$ be a finitely generated  $A$-module with $\dim M = d > 0$. Let $I_1,\ldots, I_s$ be $\frak m$-primary ideals of $A$. Assume that $k_1,\ldots,k_s$ are non-negative integers such that $k_1+\cdots+k_s = d$  and $\frak R$ is a joint reduction of $(I_1,\ldots,I_s)$ with respect to $M$ of the type $(k_1,\ldots,k_s)$.
Then $e_A(I_1^{[k_1]},\ldots, I_s^{[k_s]}, M) = e_A(\frak R, M).$
\end{corollary}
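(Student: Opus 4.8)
The plan is to deduce this from the main theorem, Theorem \ref{th37}, by promoting one of the $\mathfrak{m}$-primary ideals to the role of the distinguished ideal $J$. First I would observe that since $I_1,\ldots,I_s$ are all $\mathfrak{m}$-primary, the product $I=I_1\cdots I_s$ is $\mathfrak{m}$-primary, so $\mathrm{ht}\big((I+\mathrm{Ann}_AM)/\mathrm{Ann}_AM\big)=\dim(A/\mathrm{Ann}_AM)=\dim M=d>0$; thus the quantity $h$ of Theorem \ref{th37} equals $d$. Because $k_1+\cdots+k_s=d>0$, some $k_j\ge 1$, and after relabeling I may assume $k_s\ge 1$. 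I would then set $J:=I_s$ and $k_0:=k_s-1\ge 0$, and reinterpret $\mathfrak R$ as a joint reduction of $(I_1,\ldots,I_{s-1},J)$ of type $(k_1,\ldots,k_{s-1},k_0+1)$ — this is literally the same selection of elements as the given joint reduction of $(I_1,\ldots,I_s)$ of type $(k_1,\ldots,k_s)$.

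Next I would check the numerical hypotheses of Theorem \ref{th37}. The exponents on the non-$J$ ideals sum to $k_1+\cdots+k_{s-1}=d-k_s<d=h$, which is exactly the required strict inequality, while $k_0+(k_1+\cdots+k_{s-1})=(k_s-1)+(d-k_s)=d-1$. The one hypothesis not recorded in the corollary but demanded by the theorem is that $\mathfrak R$ be a system of parameters for $M$, and I would show this is automatic. Specializing the joint-reduction identity to $n_1=\cdots=n_s=n$ gives $I^{n+1}M=\sum_{i}(\mathfrak I_i)I_1^{n+1}\cdots I_i^{n}\cdots I_s^{n+1}M\subseteq(\mathfrak R)M$ for large $n$, so $(\mathfrak R)+\mathrm{Ann}_AM$ contains a power of the $\mathfrak{m}$-primary ideal $I$ and is therefore $\mathfrak{m}$-primary; as $\mathfrak R$ consists of $d=\dim M$ elements, it is a system of parameters for $M$.

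With all the hypotheses verified, Theorem \ref{th37} yields $e_A(J^{[k_0+1]},I_1^{[k_1]},\ldots,I_{s-1}^{[k_{s-1}]},M)=e_A(\mathfrak R,M)$. Finally I would identify the left-hand side with the desired mixed multiplicity: since $J=I_s$ and $k_0+1=k_s$, the symmetry of mixed multiplicities in their ideal arguments gives $e_A(J^{[k_0+1]},I_1^{[k_1]},\ldots,I_{s-1}^{[k_{s-1}]},M)=e_A(I_1^{[k_1]},\ldots,I_s^{[k_s]},M)$, which completes the argument. The boundary case $s=1$ must be handled separately, since then no nontrivial $I$-part remains to meet the height hypothesis of Theorem \ref{th37}; here the type-$(d)$ joint-reduction identity reads $I_1^{n+1}M=(\mathfrak R)I_1^{n}M$ for large $n$, so $(\mathfrak R)$ is a reduction of $I_1$ with respect to $M$ and $\mathfrak R$ is again a system of parameters, whence the Northcott--Rees theorem \cite{NR} gives $e_A(\mathfrak R,M)=e_A((\mathfrak R),M)=e_A(I_1,M)=e_A(I_1^{[d]},M)$.

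The hard part will be the bookkeeping that lines up the corollary's hypotheses with those of Theorem \ref{th37}: specifically, verifying that $\mathfrak R$ is automatically a system of parameters, confirming that the reinterpreted type and height conditions match the theorem's requirements exactly, and correctly isolating the $s=1$ case where the theorem does not apply verbatim. Once these are in place the conclusion is immediate.
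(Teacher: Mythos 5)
Your proposal is correct and follows essentially the same route as the paper: handle $s=1$ separately via Northcott--Rees, and for $s>1$ designate one $\mathfrak m$-primary ideal with positive exponent as $J$ so that the remaining exponents sum to less than $h=d$, then invoke Theorem \ref{th37}. Your explicit verification that $\mathfrak R$ is automatically a system of parameters (via the joint-reduction identity forcing $(\mathfrak R)+\mathrm{Ann}_AM$ to contain a power of $I$) is a welcome elaboration of a step the paper merely asserts.
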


\begin{proof}
 If $s=1$ then $(\frak R)$ is a reduction of $I_1$ with respect to $M,$ i.e., $(\frak R)I_1^nM = I_1^{n+1}M$ for all $n \gg 0$ \cite {NR}.  So by \cite[Theorem 1]{NR}, we have 
$e_A(\frak R, M)=e_A(I_1,M)=e_A(I_1^{[d]},M).$ Assume now that $s>1$. Since $I_1,\ldots, I_s$ are $\frak m$-primary ideals,
  $\frak R$ is a system of parameters for $M.$ Moreover, from $k_1+\cdots+k_s = d>0$, we may assume $k_1>0$. Then $k_2+\cdots+k_s<d$. So by Theorem \ref{th37}, we get the proof of this corollary.    
\end{proof}

\end{document}